\newlength{\defbaselineskip}
\newcommand{\setlinespacing}[1]%
           {\setlength{\baselineskip}{#1 \defbaselineskip}}
\numberwithin{equation}{section}
\newtheorem{thm}{Theorem}[section]
\newtheorem{cor}[thm]{Corollary}
\newtheorem{lem}[thm]{Lemma}
\newtheorem{prop}[thm]{Proposition}
\theoremstyle{definition}
\theoremstyle{remark}
\newtheorem{rem}[thm]{Remark}
\numberwithin{equation}{section}
\begin{document}

\title[Global well-posedness for higher-order Schr\"odinger equations]
{Global well-posedness for higher-order Schr\"odinger equations in weighted $L^2$ spaces}

\author{Youngwoo Koh and Ihyeok Seo}

\subjclass[2010]{Primary: 35A01, 35B45; Secondary: 35Q40}
\keywords{Global well-posedness, weighted estimates, Schr\"odinger equations}
\thanks{The first named author was supported by NRF grant 2012-008373.}

\address{School of Mathematics, Korea Institute for Advanced Study, Seoul 130-722, Republic of Korea}
\email{ywkoh@kias.re.kr}

\address{Department of Mathematics, Sungkyunkwan University, Suwon 440-746, Republic of Korea}
\email{ihseo@skku.edu}

\maketitle


\begin{abstract}
We obtain the global well-posedness for Schr\"odinger equations of higher orders in weighted $L^2$ spaces.
This is based on weighted $L^2$ Strichartz estimates for the corresponding propagator with higher-order dispersion.
Our method is also applied to the Airy equation which is the linear component of Korteweg-de Vries type equations.
\end{abstract}

\section{Introduction}

In this paper we are concerned with global well-posedness of the following Cauchy problem
for perturbed Schr\"odinger equations of higher orders $a>2$:
\begin{equation}\label{higher}
\begin{cases}
i\partial_tu+(-\Delta)^{a/2}u+V(x,t)u=F(x,t),\\
u(x,0)=u_0(x),
\end{cases}
\end{equation}
where $(x,t)\in\mathbb{R}^{n+1}$, $n\geq1$,
and $(-\Delta)^{a/2}$ is given by means of the Fourier transform $\mathcal{F}f$ $(=\widehat{f}\,)$:
$$\mathcal{F}[(-\Delta)^{a/2}f](\xi)=|\xi|^a\widehat{f}(\xi).$$

Nowadays, the time-dependent Schr\"odinger operator $i\partial_tu-\Delta$
became one of the most popular differential operators of modern mathematics as well as physics.
The higher-order counterpart of it has been also attracted for decades from mathematical physics.
For instance, the forth-order ($a=4$) dispersion term $\Delta^2$ has been
used in the formation and propagation of intense laser beams in a bulk medium with a nonlinearity $f$, as follows (\cite{K,KS}):
$$i\partial_tu+\delta\Delta^2u=f(|u|)u,$$
where $\delta>0$ or $\delta<0$.
Global well-posedness of solutions for this PDE model has been studied in Sobolev spaces
(see, for example, \cite{MXZ,P,P2,W}).

Our work here is aimed at finding a suitable condition on the perturbed term $V(x,t)$
which guarantees existence and uniqueness of solutions to \eqref{higher}
in the weighted $L^2$ space, $L^2(|V|dxdt)$, if $u_0\in L^2$ and $F\in L^2(|V|^{-1}dxdt)$.
Furthermore, it turns out that the solution $u$ belongs to $C_tL_x^2$.

We will consider a function class, denoted by $\mathfrak{L}^{\alpha,\beta,p}$, of $V$ to suit our purpose,
which is defined by
$$\|V\|_{\mathfrak{L}^{\alpha,\beta,p}}
:=\sup_{(x,t)\in\mathbb{R}^{n+1},r,l>0}r^\alpha l^\beta
\bigg(\frac{1}{r^nl} \int_{Q(x,r) \times I(t,l)} |V(y,s)|^p dyds \bigg)^{\frac{1}{p}}<\infty$$
for $0<\alpha\leq n/p$ and $0<\beta\leq 1/p$.
Here, $Q(x,r)$ denotes a cube in $\mathbb{R}^n$ centered at $x$ with side length $r$,
and $I(t,l)$ denotes an interval in $\mathbb{R}$ centered at $t$ with length $l$.
From the definition, it is an elementary matter to check that
$\mathfrak{L}^{\alpha,\beta,p}$ has the dilation property
$\|V(\lambda\cdot,\lambda^a\cdot)\|_{\mathfrak{L}^{\alpha,\beta,p}}
=\lambda^{-\alpha}\lambda^{-a\beta}\|V\|_{\mathfrak{L}^{\alpha,\beta,p}}$.
It should be noted that when $r=l$,
the above class is just the same as the usual Morrey-Campanato class
$\mathfrak{L}^{\alpha+\beta,p} (\mathbb{R}^{n+1})$.
Also, when $r=\sqrt{l}$,
it becomes equivalent to the so-called parabolic Morrey-Campanato class
$\mathfrak{L}^{\alpha+2\beta,p}_{par}(\mathbb{R}^{n+1})$ introduced in \cite{BBCRV}.
Furthermore,
$\mathfrak{L}^{\beta,p}(\mathbb{R};\mathfrak{L}^{\alpha,p}(\mathbb{R}^n))
\subset\mathfrak{L}^{\alpha,\beta,p}(\mathbb{R}^{n+1})$.
Recall that $\mathfrak{L}^{\alpha,p}=L^{p}$ when $p=n/\alpha$,
and even $L^{n/\alpha,\infty}\subset\mathfrak{L}^{\alpha,p}$ for $p<n/\alpha$.

Our well-posedness result for the above Cauchy problem is stated as follows:

\begin{thm}\label{thm0}
Let $n\geq1$ and $a>(n+2)/2$.
Assume that $V\in\mathfrak{L}^{\alpha,\beta,p}$ with
$\|V\|_{\mathfrak{L}^{\alpha,\beta,p}}$ small enough
for $1<p<2$, $a=\alpha+a\beta$ and $\alpha+\beta\geq(n+2)/2$.
Then, if $u_0\in L^2$ and $F\in L^2(|V|^{-1})$, there exists a unique solution of the problem \eqref{higher}
in the space $L^2(|V|)$. Furthermore, the solution $u$ belongs to $C_tL_x^2$ and satisfies the following inequalities:
\begin{equation}\label{1-1}
\|u\|_{L^2(|V|)}\leq C\|V\|_{\mathfrak{L}^{\alpha,\beta,p}}^{1/2}\|u_0\|_{L^2}+C\|V\|_{\mathfrak{L}^{\alpha,\beta,p}}\|F\|_{L_{t,x}^2(|V|^{-1})}
\end{equation}
and
\begin{equation}\label{1-2}
\sup_{t\in\mathbb{R}}\|u\|_{L_x^2}\leq C\|u_0\|_{L^2}+C\|V\|_{\mathfrak{L}^{\alpha,\beta,p}}^{1/2}\|F\|_{L_{t,x}^2(|V|^{-1})}.
\end{equation}
\end{thm}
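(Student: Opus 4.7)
The plan is to set up a standard contraction mapping argument based on Duhamel's formula together with two weighted Strichartz-type estimates for the free propagator $U(t) := e^{-it(-\Delta)^{a/2}}$. The condition $a = \alpha + a\beta$ reflects exactly the scale-invariance of $\mathfrak{L}^{\alpha,\beta,p}$ under the natural higher-order dispersive rescaling $(x,t)\mapsto(\lambda x,\lambda^a t)$, so the estimates one should aim for are
\begin{equation*}
\|U(t)u_0\|_{L^2(|V|)} \leq C \|V\|_{\mathfrak{L}^{\alpha,\beta,p}}^{1/2}\|u_0\|_{L^2},
\end{equation*}
\begin{equation*}
\Big\|\int_0^t U(t-s)G(\cdot,s)\,ds\Big\|_{L^2(|V|)} \leq C \|V\|_{\mathfrak{L}^{\alpha,\beta,p}}\|G\|_{L^2(|V|^{-1})},
\end{equation*}
together with the dual energy bound $\sup_t\|\int_0^t U(t-s)G(\cdot,s)ds\|_{L^2_x}\leq C\|V\|_{\mathfrak{L}^{\alpha,\beta,p}}^{1/2}\|G\|_{L^2(|V|^{-1})}$. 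The first estimate is equivalent by a $TT^*$ argument to the second, which amounts to boundedness of the operator $G\mapsto \int U(t-s)G(\cdot,s)\,ds$ from $L^2(|V|^{-1})$ to $L^2(|V|)$ with operator norm $\lesssim\|V\|_{\mathfrak{L}^{\alpha,\beta,p}}$.

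Assuming these inequalities, I would define the map
\begin{equation*}
\mathcal{T}u(t) := U(t)u_0 - i\int_0^t U(t-s)\bigl[V(\cdot,s)u(\cdot,s) - F(\cdot,s)\bigr]ds
\end{equation*}
on the Banach space $X := C_tL^2_x\cap L^2(|V|)$. Using the key identity $\|Vu\|_{L^2(|V|^{-1})} = \|u\|_{L^2(|V|)}$, the two displayed estimates combine to give
\begin{equation*}
\|\mathcal{T}u\|_{L^2(|V|)} \leq C\|V\|_{\mathfrak{L}^{\alpha,\beta,p}}^{1/2}\|u_0\|_{L^2} + C\|V\|_{\mathfrak{L}^{\alpha,\beta,p}}\|u\|_{L^2(|V|)} + C\|V\|_{\mathfrak{L}^{\alpha,\beta,p}}\|F\|_{L^2(|V|^{-1})},
\end{equation*}
and an identical bound for $\mathcal{T}u_1-\mathcal{T}u_2$ with the first and third terms removed. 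Since $\|V\|_{\mathfrak{L}^{\alpha,\beta,p}}$ is taken small enough that $C\|V\|_{\mathfrak{L}^{\alpha,\beta,p}}<1$, the map $\mathcal{T}$ is a strict contraction on $X$ and admits a unique fixed point, which is the desired solution. Absorbing the $\|u\|_{L^2(|V|)}$ term into the left-hand side yields \eqref{1-1}; applying the unitarity of $U(t)$ on $L^2$ to the free term and the dual energy bound to the Duhamel term yields \eqref{1-2}. Strong continuity in time into $L^2_x$ follows from strong continuity of $U(t)$ on $L^2$, together with a standard density argument (approximating $Vu-F\in L^2(|V|^{-1})$ by Schwartz functions) applied to the Duhamel integral.

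The main obstacle, and presumably the substance of the paper, lies in establishing the weighted Strichartz inequalities above for the higher-order propagator and in identifying the admissible range for $(\alpha,\beta,p)$. The relevant kernel $U(t)\delta_0$ can be written as $|t|^{-n/a}\Phi_a(x|t|^{-1/a})$, and the decay of the oscillatory profile $\Phi_a$ obtained by stationary phase for the phase $t|\xi|^a+x\cdot\xi$ becomes integrable against Morrey-type weights precisely under $a>(n+2)/2$; paired with the anisotropic nature of the norm $\mathfrak{L}^{\alpha,\beta,p}$ (independent scales in $x$ and $t$), this should force the homogeneity relation $a=\alpha+a\beta$ and the dimensional threshold $\alpha+\beta\geq(n+2)/2$. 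Once the kernel has been controlled against the mixed Morrey--Campanato norm in the regime $1<p<2$, the remainder of the proof follows the routine contraction scheme described above.
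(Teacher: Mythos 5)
Your proposal is correct and follows essentially the same route as the paper: Duhamel's formula plus the weighted Strichartz estimates of Theorem 1.2, with the smallness of $\|V\|_{\mathfrak{L}^{\alpha,\beta,p}}$ used to invert $I-\Phi$ (your contraction on $C_tL^2_x\cap L^2(|V|)$ is the same mechanism as the paper's Neumann-series inversion in $L^2(|V|)$), the identity $\|Vu\|_{L^2(|V|^{-1})}=\|u\|_{L^2(|V|)}$, and the dual homogeneous estimate for the energy bound \eqref{1-2}. The only divergence is in your closing speculation about how the Strichartz estimates themselves are proved --- the paper uses Littlewood--Paley localization, a maximal-function lemma for $\mathfrak{L}^{\alpha,\beta,p}$ weights, and bilinear real interpolation rather than a direct kernel representation --- but that concerns Theorem 1.2, not the statement at hand.
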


A natural way to achieve this result is to obtain weighted $L^2$ estimates for
the solutions in terms of the initial datum $u_0$ and the forcing term $F$.
To be precise, let us first consider the following non-perturbed problem:
\begin{equation*}
\begin{cases}
i\partial_tu+(-\Delta)^{a/2}u=F(x,t),\\
u(x,0)=f(x).
\end{cases}
\end{equation*}
As is well known, the solution is then given by
\begin{equation*}
u(x,t)=e^{it(-\Delta)^{a/2}}f(x)-i\int_0^t e^{i(t-s)(-\Delta)^{a/2}}F(\cdot,s)ds,
\end{equation*}
where the evolution operator $e^{it(-\Delta)^{a/2}}$ is defined by
$$e^{it(-\Delta)^{a/2}}f(x)=\int_{\mathbb{R}^n}e^{ix\cdot\xi}e^{it|\xi|^a}\widehat{f}(\xi)d\xi.$$
The proof of Theorem \ref{thm0} will be done in the next section
by making use of the following weighted $L^2$ Strichartz estimates:

\begin{thm}\label{thm}
Let $n\geq1$ and $a>(n+2)/2$. Assume that $w\geq0$ is a function in $\mathfrak{L}^{\alpha,\beta,p}$. Then we have
\begin{equation}\label{hop}
\big\|e^{it(-\Delta)^{a/2}}f \big\|_{L^2(w(x,t))}\leq C\|w\|_{\mathfrak{L}^{\alpha,\beta,p}}^{1/2}\|f\|_{L^2},
\end{equation}
and
\begin{equation}\label{inho}
\bigg\|\int_{0}^{t}e^{i(t-s)(-\Delta)^{a/2}}F(\cdot,s)ds\bigg\|_{L^2(w(x,t))}
\leq C\|w\|_{\mathfrak{L}^{\alpha,\beta,p}} \|F\|_{L^2(w(x,t)^{-1})}
\end{equation}
if
\begin{equation}\label{cond}
1<p<2,\quad a=\alpha+a\beta\quad\text{and}\quad\alpha+\beta\geq(n+2)/2.
\end{equation}
\end{thm}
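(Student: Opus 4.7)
The plan is to obtain both \eqref{hop} and \eqref{inho} from a single $TT^{\ast}$ reduction. Setting $Tf(x,t):=e^{it(-\Delta)^{a/2}}f(x)$ as a putative bounded operator $L^{2}_{x}\to L^{2}_{x,t}(w)$, its adjoint with respect to the unweighted $L^{2}_{x}$ pairing is
\begin{equation*}
T^{\ast}g(x)=\int_{\mathbb{R}}e^{-is(-\Delta)^{a/2}}\bigl(g(\cdot,s)w(\cdot,s)\bigr)(x)\,ds,
\end{equation*}
so the identity $\|T\|^{2}=\|TT^{\ast}\|_{L^{2}(w)\to L^{2}(w)}$ (after the change of variable $F=wg$, which identifies $L^{2}(w)$ with $L^{2}(w^{-1})$) shows that \eqref{hop} is equivalent to the \emph{untruncated} inhomogeneous inequality
\begin{equation*}
\Big\|\int_{\mathbb{R}}e^{i(t-s)(-\Delta)^{a/2}}F(\cdot,s)\,ds\Big\|_{L^{2}(w)}\leq C\|w\|_{\mathfrak{L}^{\alpha,\beta,p}}\|F\|_{L^{2}(w^{-1})}.
\end{equation*}
The truncated integral in \eqref{inho} can then be recovered from the full-line version by a standard argument using the splitting $\mathbf{1}_{\{s<t\}}=\tfrac{1}{2}(1+\operatorname{sgn}(t-s))$ together with the time-reversal symmetry of the propagator.

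Dualizing once more in $L^{2}(w)$ recasts the problem as the bilinear estimate
\begin{equation*}
\Big|\iiiint K(x-y,t-s)F(y,s)\overline{G(x,t)}\,dy\,ds\,dx\,dt\Big|\leq C\|w\|_{\mathfrak{L}^{\alpha,\beta,p}}\|F\|_{L^{2}(w^{-1})}\|G\|_{L^{2}(w^{-1})},
\end{equation*}
where $K(x,t)=\int_{\mathbb{R}^{n}}e^{i(x\cdot\xi+t|\xi|^{a})}\,d\xi$ is the oscillatory kernel of the propagator. The next step is a Littlewood--Paley decomposition $K=\sum_{k\in\mathbb{Z}}K_{k}$ in frequency, followed by a stationary-phase analysis on each piece. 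Under the parabolic dilation $(x,t)\mapsto(\lambda x,\lambda^{a}t)$---which leaves the symbol invariant---$K$ is homogeneous of degree $-n$, and the frequency-localized dispersive bounds assembled with this scaling produce, after dyadic summation, an $L^{2}(w)$ control whose effective kernel is that of an anisotropic Riesz potential of the form $(|x|^{a}+|t|)^{-n/a}$; the hypothesis $a>(n+2)/2$ is precisely what ensures convergence of the dyadic sum.

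The final step is therefore to verify the weighted $L^{2}$ boundedness of the anisotropic Riesz potential
\begin{equation*}
\mathcal{I}F(x,t):=\int_{\mathbb{R}^{n+1}}\bigl(|x-y|^{a}+|t-s|\bigr)^{-n/a}F(y,s)\,dy\,ds,
\end{equation*}
namely $\|\mathcal{I}F\|_{L^{2}(w)}\leq C\|w\|_{\mathfrak{L}^{\alpha,\beta,p}}\|F\|_{L^{2}(w^{-1})}$. This is the parabolic analogue of the classical Sawyer--Wheeden inequality for Riesz potentials with Morrey--Campanato weights, and I would prove it by a Hedberg-type pointwise estimate together with a Kolmogorov/maximal-function argument adapted to the parabolic scaling $(x,t)\mapsto(\lambda x,\lambda^{a}t)$. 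I expect the main technical obstacle to lie exactly here: the weight sits in the \emph{anisotropic} class $\mathfrak{L}^{\alpha,\beta,p}$, in which the spatial scale $r$ and the temporal scale $l$ vary independently, so the Hedberg-type estimate and its averaging must be executed with two dyadic parameters at once. The two index constraints enter naturally at this point: $a=\alpha+a\beta$ is the parabolic scale invariance of the weight, while $\alpha+\beta\geq(n+2)/2$ is the dimensional condition that lets the oscillatory decay of $K$ be absorbed into the Morrey--Campanato norm.
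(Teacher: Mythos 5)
Your $TT^{\ast}$ reduction of \eqref{hop} to the untruncated inhomogeneous estimate is correct, but from that point on the proposal has two genuine gaps. First, the passage from the full-line operator to the retarded one in \eqref{inho} via $\mathbf{1}_{\{s<t\}}=\tfrac12(1+\operatorname{sgn}(t-s))$ is not a standard consequence of the homogeneous bound: the $\operatorname{sgn}$ piece is not of the form $TT^{\ast}$, and the Christ--Kiselev transference fails precisely at the $L^2_t\to L^2_t$ endpoint, which is the situation here. (The estimate is still true, and the paper proves it by running the same bilinear kernel argument directly on the truncated kernel $\chi_{(0,\infty)}(t)K(x,t)$, which obeys the same pointwise bounds; your strategy could be repaired the same way, but as written the reduction is not justified.) Second, and more seriously, the claimed effective kernel $(|x|^{a}+|t|)^{-n/a}$ is false. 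The kernel of the propagator is parabolically homogeneous of degree $-n$, but its absolute value does not decay in $|x|$ at that rate: already for $a=2$ one has $|K(x,t)|=c|t|^{-n/2}$ independently of $x$, and for general $a$ stationary phase gives $|K(x,t)|\sim|t|^{-n/(2(a-1))}|x|^{-n(a-2)/(2(a-1))}$ in the regime $|x|^{a}\gg|t|$, which is much larger than $|x|^{-n}$ there. The same obstruction appears if you sum the absolute values of the frequency-localized pieces: the sum over $k$ of the dispersive bounds does not collapse to the Riesz potential you write down. Consequently the ``parabolic Sawyer--Wheeden'' inequality to which you defer the whole difficulty is being asked of the wrong kernel, and even for the correct majorant it is an unproved assertion for the anisotropic class $\mathfrak{L}^{\alpha,\beta,p}$ --- you yourself identify it as the main obstacle, which means the core of the theorem is left unestablished.

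The paper's route is structurally different and avoids both problems. It never sums the frequency pieces in absolute value: it first replaces $w$ by the maximal weight $w_{\ast}$ (Lemma \ref{lem2}), which has comparable $\mathfrak{L}^{\alpha,\beta,p}$ norm and is uniformly $A_2$ in $x$, and this enables the weighted Littlewood--Paley theorem, so the frequency sum is handled by orthogonality (the localized operator norms carry the factor $2^{k(\alpha+a\beta-a)/2}=1$ under the scaling condition $a=\alpha+a\beta$, so a triangle-inequality summation over $k$ would diverge). Within a single frequency block it decomposes the kernel dyadically in \emph{physical} space, proves the three bilinear bounds \eqref{21}--\eqref{23} by cube-counting, Young's inequality, the stationary-phase bound $\|\psi_jK\|_{\infty}\lesssim 2^{-jn/2}$ and the Morrey--Campanato condition, and then sums over $j$ by the bilinear real interpolation Lemma \ref{lem}; this last step is essential because estimate \eqref{21} alone decays in $j$ but carries the wrong weight $w^{-p}$, while \eqref{22} and \eqref{23} grow in $j$, and only the $\ell^1$-interpolated combination under $\alpha+\beta\geq(n+2)/2$ closes the argument. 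Your proposal contains no substitute for either the $w\mapsto w_{\ast}$ device or the bilinear interpolation, and the hypothesis $a>(n+2)/2$ plays a different role in the paper (it guarantees that admissible $\alpha,\beta,p$ exist at all) than the convergence role you assign to it.
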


\begin{rem}
The condition $a=\alpha+a\beta$ in the theorem is needed for the scaling invariance of the estimates \eqref{hop} and \eqref{inho}
under the scaling $(x,t)\rightarrow(\lambda x,\lambda^at)$, $\lambda>0$.
\end{rem}

\begin{rem}\label{rem}
Since $0<\alpha\leq n/p$ and $0<\beta\leq 1/p$,
it is not difficult to check that for $n\geq1$ and $a>(n+2)/2$, there exist $\alpha$, $\beta$ and $p$ satisfying the condition \eqref{cond} as follows:
$$1<p\leq\min\{\frac{2(a-1)}{a},\frac{a+n}a\},$$
$$\max\{\frac{an}{2(a-1)},\frac{a}{p'}\}\leq\alpha\leq\frac np,$$
$$1-\frac{n}{ap} \leq \beta \leq \min\{ \frac{n+2-2a}{2(1-a)}, \frac{1}{p} \}.$$
At this point, we would like to point out that when $n=1$, our theorems can also cover the fractional order cases where $3/2<a<2$.
\end{rem}

The Strichartz estimates for the Schr\"odinger equation ($a=2$) in the usual $L_t^qL_x^r$ norms
have been extensively developed in the works \cite{Str,GV,CW,Ka,KT,F,V,Ko,LS}.
See also \cite{CN,CN2,S,LS2} for related results.
In the weighted $L^2$ setting as above, they were also studied in \cite{RV,BBRV,S3} using weighted $L^2$ resolvent estimates for the Laplacian,
and were applied to the problem of well-posedness for the Schr\"odinger equation in weighted $L^2$ spaces.
Our method here for higher orders $a>2$ is entirely different from them and is inspired by \cite{BBCRV} and our earlier work \cite{KoS} for the wave operator $-\partial_t^2+\Delta$.
It will be based on a combination of a localization argument in weighted $L^2$ spaces and a bilinear interpolation argument. See the final section, Section \ref{sec4}.
The key ingredient in doing so is Lemma \ref{lem2} in Section \ref{sec3} which enables us to make use of
frequency localized estimates (see Proposition \ref{prop}) in weighted $L^2$ spaces
without additional assumption on the weight $w$.

Let us now mention an implication of Theorem \ref{thm} for the Airy equation
which is the linear component of Korteweg-de Vries type equations.
First, consider the corresponding problem
\begin{equation}\label{airy}
\begin{cases}
\partial_tu+\partial_x^3u=F(x,t),\\
u(x,0)=f(x),
\end{cases}
\end{equation}
where $(x,t)\in\mathbb{R}^{1+1}$.
Using the Fourier transform, the solution is then written as
\begin{equation*}
u(x,t)=e^{-t\partial_x^3}f(x)+\int_0^t e^{-(t-s)\partial_x^3}F(\cdot,s)ds.
\end{equation*}
Here we note that the operator $e^{-t\partial_x^3}$ is given by
$$e^{-t\partial_x^3}f(x)=\int_{\mathbb{R}^1}e^{ix\xi}e^{it\xi^3}\widehat{f}(\xi)d\xi.$$
Thus, the proof of Theorem \ref{thm} when $n=1$ and $a=3$ would be clearly worked for \eqref{airy}.
Then, from this observation and Remark \ref{rem}, the resulting estimates would be given as follows:

\begin{thm}
Assume that $w\geq0$ is a function in $\mathfrak{L}^{\alpha,\beta,p}(\mathbb{R})$. Then we have
\begin{equation*}
\big\|e^{-t\partial_x^3}f \big\|_{L^2(w(x,t))}\leq C\|w\|_{\mathfrak{L}^{\alpha,\beta,p}}^{1/2}\|f\|_{L^2},
\end{equation*}
and
\begin{equation*}
\bigg\|\int_{0}^{t}e^{-(t-s)\partial_x^3}F(\cdot,s)ds\bigg\|_{L^2(w(x,t))}
\leq C\|w\|_{\mathfrak{L}^{\alpha,\beta,p}} \|F\|_{L^2(w(x,t)^{-1})}
\end{equation*}
for $1<p\leq4/3$, $3/4\leq\alpha\leq1/p$ and $1-1/(3p)\leq\beta\leq3/4$.
\end{thm}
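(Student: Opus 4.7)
The plan is to deduce this theorem as a direct specialization of Theorem \ref{thm} with $n=1$ and $a=3$, together with the explicit parameter analysis of Remark \ref{rem}. Two items need attention: first, verifying that the proof of Theorem \ref{thm} transfers from the fractional Schr\"odinger propagator $e^{it(-\Delta)^{3/2}}$ (whose Fourier multiplier is $e^{it|\xi|^3}$) to the Airy propagator $e^{-t\partial_x^3}$ (whose multiplier is $e^{it\xi^3}$); second, extracting the advertised numerical ranges for $p,\alpha,\beta$.

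For the symbol reduction, the two phases agree on $\{\xi>0\}$ and differ only by complex conjugation on $\{\xi<0\}$, since $|\xi|^3=-\xi^3$ there. Thus I would decompose $f=P_+f+P_-f$ into positive and negative frequency parts. On $P_+f$ the two propagators coincide and Theorem \ref{thm} applies directly. On $P_-f$, combining complex conjugation with the reflection $x\mapsto-x$ intertwines $e^{-t\partial_x^3}P_-$ with $e^{it(-\Delta)^{3/2}}$ acting on a function with positive-frequency spectrum; both operations are isometries of $L^2$ and also preserve the Morrey--Campanato norm $\|\cdot\|_{\mathfrak{L}^{\alpha,\beta,p}}$, so no factors are lost. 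An alternative (and in fact more efficient) route, which is what the author signals by saying that the proof of Theorem \ref{thm} ``would be clearly worked for \eqref{airy}'', is to re-read that proof and notice that every estimate used is frequency-localized on dyadic annuli $|\xi|\sim 2^k$, where $|\partial_\xi^j\xi^3|=|\partial_\xi^j|\xi|^3|$ for all $j$ and all $\xi\ne 0$. Since only these sizes enter into the stationary-phase bounds behind Lemma \ref{lem2} and Proposition \ref{prop}, the argument carries through verbatim.

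Once this transfer is granted, the parameter ranges are routine. Substituting $n=1$ and $a=3$ into Remark \ref{rem} gives
\begin{equation*}
1<p\leq\min\Bigl\{\tfrac{2(a-1)}{a},\tfrac{a+n}{a}\Bigr\}=\min\Bigl\{\tfrac{4}{3},\tfrac{4}{3}\Bigr\}=\tfrac{4}{3},
\end{equation*}
\begin{equation*}
\max\Bigl\{\tfrac{an}{2(a-1)},\tfrac{a}{p'}\Bigr\}=\max\Bigl\{\tfrac{3}{4},3-\tfrac{3}{p}\Bigr\}\leq\alpha\leq\tfrac{n}{p}=\tfrac{1}{p},
\end{equation*}
\begin{equation*}
1-\tfrac{n}{ap}=1-\tfrac{1}{3p}\leq\beta\leq\min\Bigl\{\tfrac{n+2-2a}{2(1-a)},\tfrac{1}{p}\Bigr\}=\min\Bigl\{\tfrac{3}{4},\tfrac{1}{p}\Bigr\}.
\end{equation*}
For $p\leq 4/3$ one has $3-3/p\leq 3/4$ and $1/p\geq 3/4$, so the max collapses to $3/4$ and the min collapses to $3/4$, yielding exactly $1<p\leq 4/3$, $3/4\leq\alpha\leq 1/p$, and $1-1/(3p)\leq\beta\leq 3/4$, as asserted. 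The only genuine point requiring care is the symbol reduction in the previous paragraph; the rest is bookkeeping.
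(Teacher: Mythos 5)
Your proposal is correct and follows essentially the same route as the paper, which likewise obtains this theorem by running the proof of Theorem \ref{thm} with $n=1$, $a=3$ for the phase $\xi^3$ in place of $|\xi|^3$ (the only phase-dependent ingredient being the stationary-phase bound of Lemma \ref{25}, where $\partial_\xi^2(\xi^3)=6\xi\neq0$ on $|\xi|\sim1$) and then specializing the parameter ranges of Remark \ref{rem}; your arithmetic there matches the stated ranges. The only slip is attributing stationary-phase input to Lemma \ref{lem2}, which concerns only the weight class and is independent of the propagator, but this does not affect the argument.
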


Also, from the same argument for Theorem \ref{thm0}, this theorem implies the following global well-posedness for the corresponding Cauchy problem
\begin{equation}\label{airy2}
\begin{cases}
\partial_tu+\partial_x^3u+V(x,t)u=F(x,t),\\
u(x,0)=u_0(x).
\end{cases}
\end{equation}

\begin{cor}
Assume that $V\in\mathfrak{L}^{\alpha,\beta,p}$ with
$\|V\|_{\mathfrak{L}^{\alpha,\beta,p}}$ small enough
for  $1<p\leq4/3$, $3/4\leq\alpha\leq1/p$ and $1-1/(3p)\leq\beta\leq3/4$.
Then, if $u_0\in L^2$ and $F\in L^2(|V|^{-1})$, there exists a unique solution of the problem \eqref{airy2}
in the space $L^2(|V|)$. Furthermore, the solution $u$ belongs to $C_tL_x^2$ and satisfies the inequalities \eqref{1-1} and \eqref{1-2}.
\end{cor}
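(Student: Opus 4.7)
The plan is to replicate the proof of Theorem~\ref{thm0} with only formal changes, replacing the Schr\"odinger propagator $e^{it(-\Delta)^{a/2}}$ by the Airy propagator $e^{-t\partial_x^3}$ and invoking the weighted Strichartz estimates of the preceding theorem in place of those of Theorem~\ref{thm}. The starting point is the Duhamel representation for \eqref{airy2},
$$u(x,t) = e^{-t\partial_x^3}u_0(x) + \int_0^t e^{-(t-s)\partial_x^3}\bigl(F(\cdot,s) - V(\cdot,s)u(\cdot,s)\bigr)\,ds,$$
whose right-hand side I denote by $Tu$. I then seek $u$ as a fixed point of $T$ in the Banach space $L^2(|V|)$.

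Applying the homogeneous weighted estimate with $w = |V|$ controls $\|e^{-t\partial_x^3}u_0\|_{L^2(|V|)}$ by $C\|V\|_{\mathfrak{L}^{\alpha,\beta,p}}^{1/2}\|u_0\|_{L^2}$, and applying the inhomogeneous weighted estimate to $G := F - Vu$, together with the trivial identity $\|Vu\|_{L^2(|V|^{-1})} = \|u\|_{L^2(|V|)}$, gives
$$\|Tu\|_{L^2(|V|)} \le C\|V\|_{\mathfrak{L}^{\alpha,\beta,p}}^{1/2}\|u_0\|_{L^2} + C\|V\|_{\mathfrak{L}^{\alpha,\beta,p}}\|F\|_{L^2(|V|^{-1})} + C\|V\|_{\mathfrak{L}^{\alpha,\beta,p}}\|u\|_{L^2(|V|)},$$
and the same computation yields $\|Tu_1 - Tu_2\|_{L^2(|V|)} \le C\|V\|_{\mathfrak{L}^{\alpha,\beta,p}}\,\|u_1 - u_2\|_{L^2(|V|)}$. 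Taking $\|V\|_{\mathfrak{L}^{\alpha,\beta,p}}$ small enough so that $C\|V\|_{\mathfrak{L}^{\alpha,\beta,p}} < 1$ makes $T$ a contraction on $L^2(|V|)$; its unique fixed point is the desired solution, and the contraction bound is exactly \eqref{1-1}.

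The estimate \eqref{1-2} is obtained by factoring
$$u(t) = e^{-t\partial_x^3}\Bigl(u_0 + \int_0^t e^{s\partial_x^3}(F - Vu)(\cdot,s)\,ds\Bigr),$$
using unitarity of $e^{-t\partial_x^3}$ on $L^2_x$, and invoking the dual form of the homogeneous weighted estimate,
$$\Bigl\|\int_{\mathbb{R}} e^{s\partial_x^3}G(\cdot,s)\,ds\Bigr\|_{L^2_x} \le C\|V\|_{\mathfrak{L}^{\alpha,\beta,p}}^{1/2}\|G\|_{L^2(|V|^{-1})},$$
applied to $G = \mathbf{1}_{[0,t]}(F - Vu)$, and then feeding \eqref{1-1} back into the resulting bound. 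Continuity of $u$ with values in $L^2_x$ follows by approximating the data by Schwartz functions, for which the solution is classical, and passing to the limit via the uniform bound \eqref{1-2}.

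The only nontrivial point I anticipate is legitimizing the truncation $\mathbf{1}_{[0,t]}$ in the dual step used for \eqref{1-2}: the untruncated dual estimate must be converted into one for the retarded integral. This is handled, as in the proof of Theorem~\ref{thm0}, by a standard invocation of the Christ--Kiselev lemma, and it is the step most worth double-checking. Everything else is a formal transcription of the $a=3$, $n=1$ instance of the preceding scheme.
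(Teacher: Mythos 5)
Your argument is correct and follows the paper's own route: the paper proves this corollary by literally repeating the proof of Theorem \ref{thm0} with the Airy propagator and the weighted Strichartz estimates of the preceding theorem, which is exactly what you do (your contraction mapping versus the paper's Neumann-series inversion of $I-\Phi$ is an immaterial difference). The one point you flag for double-checking --- legitimizing the truncation $\mathbf{1}_{[0,t]}$ in the dual step for \eqref{1-2} --- requires no Christ--Kiselev lemma: since the output norm there is $L^2_x$ at each fixed $t$ and the right-hand side of the dual estimate \eqref{dual} only decreases under truncation of the forcing, one simply applies the untruncated estimate to $\chi_{[0,t]}(F-Vu)$ and takes the supremum over $t$, which is what the paper does.
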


Throughout this paper, the letter $C$ stands for a constant which may be different
at each occurrence. Also, we denote by $\widehat{f}$ the Fourier transform of $f$
and by $\langle f,g\rangle$ the usual inner product of $f,g$ on $L^2$.


\section{Proof of Theorem \ref{thm0}}

In this section we explain how to deduce the well-posedness (Theorem \ref{thm0})
for the Cauchy problem \eqref{higher} from the weighted $L^2$ Strichartz estimates in Theorem \ref{thm}.

The starting point is that the solution of \eqref{higher} is given by the following integral equation
\begin{equation}\label{sol}
u(x,t)= e^{it(-\Delta)^{a/2}}u_0(x) - i\int_0^t e^{i(t-s)(-\Delta)^{a/2}} F(\cdot,s)ds
+\Phi(u)(x,t),
\end{equation}
where
$$\Phi(u)(x,t)=-i\int_0^t e^{i(t-s)(-\Delta)^{a/2}}(Vu)(\cdot,s)ds.$$
Note here that
$$(I-\Phi)(u)=e^{it(-\Delta)^{a/2}}u_0(x) - i\int_0^t e^{i(t-s)(-\Delta)^{a/2}} F(\cdot,s)ds,$$
where $I$ is the identity operator.
Then, since $u_0\in L^2$ and $F\in L^2(|V|^{-1})$,
from the weighted $L^2$ Strichartz estimates in Theorem \ref{thm} with $w=|V|$,
it follows that
$$(I-\Phi)(u)\in L^2(|V|).$$
Hence, for the global existence of the solution, we want to show that
the operator $I-\Phi$ has an inverse in the space $L^2(|V|)$.
This is valid if the operator norm for $\Phi$ in $L^2(|V|)$ is strictly less than $1$.
Namely, we need to show $\|\Phi(u)\|_{L^2(|V|)}<\|u\|_{L^2(|V|)}$.
But, from the inhomogeneous estimate \eqref{inho} with $w=|V|$, we see that
\begin{align}\label{inv}
\nonumber\|\Phi(u)\|_{L^2(|V|)}&\leq C\|V\|_{\mathfrak{L}^{\alpha,\beta,p}}\|Vu\|_{L^2(|V|^{-1})}\\
\nonumber&=C\|V\|_{\mathfrak{L}^{\alpha,\beta,p}}\|u\|_{L^2(|V|)}\\
&<\frac12\|u\|_{L^2(|V|)}
\end{align}
by the smallness assumption on the norm $\|V\|_{\mathfrak{L}^{\alpha,\beta,p}}$.

On the other hand, from \eqref{sol}, \eqref{inv} and Theorem \ref{thm}, it follows easily that
\begin{align}\label{1123}
\nonumber\|u\|_{L^2(|V|)}&\leq C\big\|e^{it(-\Delta)^{a/2}}u_0 \big\|_{L^2(|V|)}
+C\bigg\|\int_{0}^{t}e^{i(t-s)(-\Delta)^{a/2}}F(\cdot,s)ds\bigg\|_{L^2(|V|)}\\
&\leq C\|V\|_{\mathfrak{L}^{\alpha,\beta,p}}^{1/2}\|u_0\|_{L^2}+C\|V\|_{\mathfrak{L}^{\alpha,\beta,p}}\|F\|_{L_{t,x}^2(|V|^{-1})}.
\end{align}
Hence \eqref{1-1} is proved.
To show \eqref{1-2}, we will use \eqref{1123} and the following estimate
\begin{equation}\label{dual}
    \bigg\|\int_{-\infty}^{\infty}e^{-is(-\Delta)^{a/2}}F(\cdot,s)ds\bigg\|_{L_x^2}
    \leq C\|w\|_{\mathfrak{L}^{\alpha,\beta,p}}^{1/2}\|F\|_{L^2(w(x,t)^{-1})}
\end{equation}
which is the dual version of \eqref{hop}.
First, from \eqref{sol}, \eqref{dual} with $w=|V|$, and the simple fact that $e^{it(-\Delta)^{a/2}}$ is an isometry in $L^2$,
one can see that
$$\|u\|_{L_x^2}\leq C\|u_0\|_{L^2}
+C\|V\|_{\mathfrak{L}^{\alpha,\beta,p}}^{1/2}\|F\|_{L^2(|V|^{-1})}
+C\|V\|_{\mathfrak{L}^{\alpha,\beta,p}}^{1/2}\|Vu\|_{L^2(|V|^{-1})}.$$
Since $\|Vu\|_{L^2(|V|^{-1})}=\|u\|_{L^2(|V|)}$ and $\|V\|_{\mathfrak{L}^{\alpha,\beta,p}}$ is small,
from this and \eqref{1123}, it follows now that
$$\|u\|_{L_x^2}\leq C\|u_0\|_{L^2}+C\|V\|_{\mathfrak{L}^{\alpha,\beta,p}}^{1/2}\|F\|_{L_{t,x}^2(|V|^{-1})}.$$
So we get \eqref{1-2}.
This completes the proof.

\section{Preliminaries}\label{sec3}

Here we present some preliminary lemmas which are needed in the next section for the proof of Theorem \ref{thm}.

Given two complex Banach spaces $A_0$ and $A_1$,
for $0<\theta<1$ and $1\leq q\leq\infty$, we denote by $(A_0,A_1)_{\theta,q}$
the real interpolation spaces equipped with the norms
$$\|a\|_{(A_0,A_1)_{\theta,\infty}}=\sup_{0<t<\infty}t^{-\theta}K(t,a)$$
and
$$\|a\|_{(A_0,A_1)_{\theta,q}}=\bigg(\int_0^\infty(t^{-\theta}K(t,a))^qdt\bigg)^{1/q},\quad1\leq q<\infty,$$
where
$$K(t,a)=\inf_{a=a_0+a_1}\|a_0\|_{A_0}+t\|a_1\|_{A_1}$$
for $0<t<\infty$ and $a\in A_0+A_1$.
In particular, $(A_0,A_1)_{\theta,q}=A_0=A_1$ if $A_0=A_1$.
See ~\cite{BL,T} for details.
The following bilinear interpolation lemma concerning these interpolation spaces
is well-known (see \cite{BL}, Section 3.13, Exercise 5(b)).

\begin{lem}\label{lem}
For $i=0,1$, let $A_i,B_i,C_i$ be Banach spaces and let $T$ be a bilinear operator such that
\begin{align*}
&T:A_0\times B_0\rightarrow C_0,\\
&T:A_0\times B_1\rightarrow C_1,\\
&T:A_1\times B_0\rightarrow C_1.
\end{align*}
Then one has for $\theta=\theta_0+\theta_1$ and $1/q+1/r\geq1$
$$T:(A_0,A_1)_{\theta_0,q}\times(B_0,B_1)_{\theta_1,r}\rightarrow(C_0,C_1)_{\theta,1}.$$
Here\, $0<\theta_i<\theta<1$ and $1\leq q,r\leq\infty$.
\end{lem}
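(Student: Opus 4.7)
My plan is to prove this well-known bilinear interpolation lemma by combining the $J$-method representation of the real interpolation spaces with bilinearity. The $J$-method is preferable to the $K$-method here because its atoms lie in the intersections $A_0\cap A_1$ and $B_0\cap B_1$, where all three given bilinear bounds on $T$ apply; this sidesteps the absence of a hypothesis on $T\colon A_1\times B_1$.

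By the equivalence of the $J$- and $K$-methods I write $a=\sum_{\mu\in\mathbb{Z}}u_\mu$ with $u_\mu\in A_0\cap A_1$ and $J_\mu:=\max(\|u_\mu\|_{A_0},2^\mu\|u_\mu\|_{A_1})$ satisfying
$$\Bigl(\sum_\mu(2^{-\mu\theta_0}J_\mu)^q\Bigr)^{1/q}\lesssim \|a\|_{(A_0,A_1)_{\theta_0,q}},$$
and analogously $b=\sum_\nu v_\nu$ with atoms $v_\nu\in B_0\cap B_1$ and $J$-values $J_\nu$. By bilinearity $T(a,b)=\sum_{\mu,\nu}T(u_\mu,v_\nu)$, and the three bilinear hypotheses yield
$$\|T(u_\mu,v_\nu)\|_{C_0}\lesssim J_\mu J_\nu\quad\text{and}\quad \|T(u_\mu,v_\nu)\|_{C_1}\lesssim 2^{-\max(\mu,\nu)}J_\mu J_\nu,$$
the second estimate being the minimum of the two available $C_1$-bounds. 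Grouping by $\kappa=\max(\mu,\nu)$ into $w_\kappa=\sum_{\max(\mu,\nu)=\kappa}T(u_\mu,v_\nu)$ gives $J(2^\kappa,w_\kappa;C_0,C_1)\lesssim \sum_{\max(\mu,\nu)=\kappa}J_\mu J_\nu$. The $J$-method description of $(C_0,C_1)_{\theta,1}$ with $\theta=\theta_0+\theta_1$ then reduces the target norm to
$$\sum_\kappa 2^{-\kappa\theta}J(2^\kappa,w_\kappa)\lesssim \sum_{\mu,\nu}K(\mu-\nu)(2^{-\mu\theta_0}J_\mu)(2^{-\nu\theta_1}J_\nu),$$
where, via the identity $\max(\mu,\nu)\theta=\mu\theta_0+\nu\theta_1+\max(\mu-\nu,0)\theta_1+\max(\nu-\mu,0)\theta_0$, the kernel $K$ is the two-sided geometric sequence $K(k)=2^{-k\theta_1}$ for $k\ge 0$ and $K(k)=2^{k\theta_0}$ for $k<0$. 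Since $\theta_0,\theta_1>0$, $K$ lies in every $\ell^s$ with $s\in[1,\infty]$, and combining H\"older's inequality in one variable with Young's convolution inequality in the other bounds the right-hand side by $\|K\|_{\ell^s}\|a\|_{(A_0,A_1)_{\theta_0,q}}\|b\|_{(B_0,B_1)_{\theta_1,r}}$ with $1/s=2-1/q-1/r$; the constraint $s\ge 1$ is precisely the hypothesis $1/q+1/r\ge 1$.

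The main obstacle I expect is identifying the correct scale $\kappa=\max(\mu,\nu)$ (rather than the more symmetric $\mu+\nu$) at which to collect the atoms $T(u_\mu,v_\nu)$ in the $J$-representation of $T(a,b)$. This asymmetric choice is dictated by the $2^{-\max(\mu,\nu)}$ factor in the $C_1$-estimate, and it is precisely what produces the summable convolution kernel $K$ whose $\ell^s$-membership, through Young's inequality, encodes the exponent condition $1/q+1/r\ge 1$. Once this identification has been made, the remainder of the argument is a routine discrete H\"older-plus-Young calculation.
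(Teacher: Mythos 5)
The paper does not prove this lemma; it only cites it as a known result (Bergh--L\"ofstr\"om, Section 3.13, Exercise 5(b)), so there is no in-paper argument to compare against. Your $J$-method proof is correct and is in fact the standard solution to that exercise: the atoms $u_\mu\in A_0\cap A_1$, $v_\nu\in B_0\cap B_1$ make all three hypotheses on $T$ usable, the minimum of the two $C_1$-bounds gives the $2^{-\max(\mu,\nu)}$ gain, the identity for $\max(\mu,\nu)\theta$ with $\theta=\theta_0+\theta_1$ produces the geometrically decaying Toeplitz kernel $K$, and H\"older plus Young on $\sum_{\mu,\nu}K(\mu-\nu)x_\mu y_\nu$ yields exactly the condition $1/q+1/r\geq1$. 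The only point you pass over silently is the (routine) justification that $\sum_{\mu,\nu}T(u_\mu,v_\nu)$ converges to $T(a,b)$ in $C_0+C_1$, i.e.\ that $T$ is a priori defined and continuous where needed; this is standard and does not affect the correctness of the argument.
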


Let us now recall that a weight\footnote{\,It is a locally integrable function
which is allowed to be zero or infinite only on a set of Lebesgue measure zero.}
$w:\mathbb{R}^n\rightarrow[0,\infty]$
is said to be in the Muckenhoupt $A_2(\mathbb{R}^n)$ class if there is a constant $C_{A_2}$ such that
\begin{equation*}
\sup_{Q\text{ cubes in }\mathbb{R}^{n}}
\bigg(\frac1{|Q|}\int_{Q}w(x)dx\bigg)\bigg(\frac1{|Q|}\int_{Q}w(x)^{-1}dx\bigg)<C_{A_2}.
\end{equation*}
(See, for example, \cite{G}.)
In the following we obtain a useful property of a weight in the space $\mathfrak{L}^{\alpha,\beta,p}$.
For some similar properties of the usual Morrey-Campanato spaces, we refer the reader to \cite{CS,KoS}.
Such property has been used earlier in \cite{CS,S2,S3} concerning unique continuation for Schr\"odinger equations.

\begin{lem}\label{lem2}
For a weight $w\in\mathfrak{L}^{\alpha,\beta,p}$ on $\mathbb{R}^{n+1}$,
let $w_*(x,t)$ be the $n$-dimensional maximal function given by
$$w_*(x,t)=\sup_{Q'}\bigg(\frac{1}{|Q'|}\int_{Q'}w(y,t)^\rho dy\bigg)^{\frac{1}{\rho}},\quad\rho>1,$$
where $Q'$ denotes a cube in $\mathbb{R}^n$ with center $x$.
Then, if $p>\rho$
we have $\|w_*\|_{\mathfrak{L}^{\alpha,\beta,p}}\leq C\|w\|_{\mathfrak{L}^{\alpha,\beta,p}}$,
and $w_\ast(\cdot,t)\in A_2(\mathbb{R}^n)$ in the $x$ variable with a constant $C_{A_2}$ uniform in almost every $t\in\mathbb{R}$.
\end{lem}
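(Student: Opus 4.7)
The plan is to prove the two assertions separately. For the $\mathfrak{L}^{\alpha,\beta,p}$ bound, I would fix $(x,t) \in \mathbb{R}^{n+1}$ and scales $r, l > 0$ and split the weight spatially as $w = w_1 + w_2$, where $w_1(y,s) := w(y,s)\chi_{2Q(x,r)}(y)$. Since $w_1^\rho$ and $w_2^\rho$ have disjoint supports and $a \mapsto a^{1/\rho}$ is subadditive for $\rho \geq 1$, this yields the pointwise bound $w_* \leq (w_1)_* + (w_2)_*$, and it suffices to control each piece in turn.

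For the local piece, the identity $(w_1)_*(y,s)^p = [M(w_1(\cdot,s)^\rho)(y)]^{p/\rho}$ combined with the $L^{p/\rho}$-boundedness of the Hardy-Littlewood maximal operator $M$ on $\mathbb{R}^n$ (which is where the hypothesis $p > \rho$, so $p/\rho > 1$, enters) gives
\begin{equation*}
\int_{\mathbb{R}^n}(w_1)_*(y,s)^p\,dy \leq C\int_{2Q(x,r)} w(y,s)^p\,dy
\end{equation*}
for a.e.\ $s$. Integrating this in $s \in I(t,l)$ and invoking the $\mathfrak{L}^{\alpha,\beta,p}$ norm on the cube-interval $2Q(x,r) \times I(t,l)$ produces precisely the required scaling after multiplication by $r^\alpha l^\beta (r^n l)^{-1/p}$.

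For the tail piece, the key geometric observation is that for $y \in Q(x,r)$ and $r' \leq r$, the cube $Q(y,r')$ is contained in $2Q(x,r)$, so $(w_2)_*(y,s)$ only sees cubes of side $r' > r$; since $Q(y,r') \subset Q(x,2r')$ for such $r'$, a further H\"older step with exponent $p/\rho$ yields the $y$-independent bound
\begin{equation*}
(w_2)_*(y,s) \leq C\sup_{R > 2r}\left(\frac{1}{R^n}\int_{Q(x,R)} w(z,s)^p\,dz\right)^{1/p}.
\end{equation*}
I would then raise this to the $p$-th power, dominate the sup over $R > 2r$ by a sum over dyadic scales $R \sim 2^k r$ (using monotonicity of integrals in $R$), swap this sum with the $s$-integral by Tonelli, and apply the $\mathfrak{L}^{\alpha,\beta,p}$ norm on each dyadic cube-interval $Q(x, 2^k r) \times I(t,l)$. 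The resulting geometric series $\sum_{k \geq 1} 2^{-k\alpha p}$ converges because $\alpha > 0$, and the trivial $y$-integration over $Q(x,r)$ contributes the remaining factor $r^n$ needed to match the scaling.

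For the $A_2$ claim, the identity $w_*(\cdot,t) = [M(w(\cdot,t)^\rho)]^{1/\rho}$ exhibits $w_*(\cdot,t)$ as a $\delta$-power of a Hardy-Littlewood maximal function with $\delta = 1/\rho \in (0,1)$. The Coifman-Rochberg theorem then places $w_*(\cdot,t)$ in $A_1 \subset A_2$ with $A_1$ constant depending only on $\rho$ and $n$, hence uniform in $t$; the a.e.\ finiteness of the maximal function required to invoke Coifman-Rochberg is supplied by the first part of the lemma, which puts $w_*$ in $L^p_{\mathrm{loc}}(\mathbb{R}^{n+1})$. The main obstacle is the tail estimate: one must move the supremum over $R$ past the $s$-integral without losing the correct scaling in either $r$ or $l$, and the dyadic linearization plus geometric series argument above is the cleanest route.
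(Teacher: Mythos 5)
Your proposal is correct and follows essentially the same route as the paper: a near/far spatial localization, with the Hardy--Littlewood maximal theorem in $L^{p/\rho}$ (using $p>\rho$) handling the near part, the geometric decay $2^{-k\alpha}$ extracted from the $\mathfrak{L}^{\alpha,\beta,p}$ norm on dyadic scales handling the tail, and the Coifman--Rochberg theorem plus $A_1\subset A_2$ for the second assertion. The only cosmetic difference is that the paper decomposes the tail into dyadic annuli $w\chi_{R_k}$ at the outset and sums the resulting $L^p$ norms, whereas you keep the tail as a single piece, bound its maximal function pointwise by a supremum over large scales, and dyadicize only at the end; both yield the same convergent series and the same scaling.
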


\begin{proof}
To show $\|w_*\|_{\mathfrak{L}^{\alpha,\beta,p}}\leq C\|w\|_{\mathfrak{L}^{\alpha,\beta,p}}$,
we first fix a cube $Q(z,r)\times I(\tau,l)$ in $\mathbb{R}^{n+1}$.
Then, we define the rectangles $R_k$, $k\geq1$, such that $(y,t)\in R_k$ if $|t-\tau|<2l$
and $y\in Q(z,2^{k+1}r)\setminus Q(z,2^kr)$,
and set $R_0=Q(z,4r)\times I(\tau,4l)$.

Now we can write
$$w(y,t)=\sum_{k\geq 0}w^{(k)}(y,t)+\phi(y,t),$$
where $w^{(k)}=w\chi_{R_k}$ with the characteristic function $\chi_{R_k}$ of the set $R_k$,
and $\phi(y,t)$ is a function supported on $\mathbb{R}^{n+1}\setminus\bigcup_{k\geq0}R_k$.
Also it is not difficult to see that
$$w_*(x,t)\leq\sum_{k \geq 0}w^{(k)}_*(x,t)+\phi_*(x,t)$$
and
\begin{align*}
\Big(\int_{Q(z,r)\times I(\tau,l)}w_*(x,t)^pdxdt\Big)^{\frac{1}{p}}
&\leq\sum_{k \geq 0}\Big(\int_{Q(z,r)\times I(\tau,l)} w^{(k)}_*(x,t)^p dxdt \Big)^{\frac{1}{p}}\\
&+\Big( \int_{Q(z,r)\times I(\tau,l)}\phi_*(x,t)^pdxdt\Big)^{\frac{1}{p}}.
\end{align*}
Since $(x,t)\in Q(z,r) \times Q(\tau,l)$, it is clear that $\phi_*(x,t)=0$.
For the term where $k=0$, we use the following well-known maximal theorem
$$\|M(f)\|_q\leq C\|f\|_q,\quad q>1,$$
where $M(f)$ is the usual Hardy-Littlewood maximal function defined by
\begin{equation}\label{maxi}
M(f)(x)=\sup_{Q}\frac1{|Q|}\int_{Q}f(y)dy.
\end{equation}
(Here, the sup is taken over all cubes $Q$ in $\mathbb{R}^{n}$ with center $x$.)
Indeed, by applying the maximal theorem with $q=p/\rho$ in $x$-variable,
one can see that if $p>\rho$
\begin{align}\label{sso}
\nonumber r^\alpha l^\beta \Big( \frac{1}{r^n l} \int_{Q(z,r) \times I(\tau,l)}&w^{(0)}_* (x,t)^p dxdt \Big)^{\frac{1}{p}}\\
\nonumber\leq &C r^\alpha l^\beta \Big( \frac{1}{r^n l} \int_{Q(z,4r) \times I(\tau,4l)} w (y,t)^p dydt \Big)^{\frac{1}{p}}\\
\leq &C\|w\|_{\mathfrak{L}^{\alpha,\beta,p}}.
\end{align}
So it remains to consider the term where $k\geq1$.

Let $k \geq 1$.
Since $(x,t)\in Q(z,r) \times Q(\tau,l)$, it follows that
    \begin{align*}
    w^{(k)}_*(x,t)
    &= \sup_{Q' \subset \mathbb{R}^n}\bigg(\frac{1}{|Q'|}\int_{Q'} w(y,t)^\rho\chi_{R_k}(y,t)dy\bigg)^{\frac{1}{\rho}}\\
    &\leq C\bigg(\frac{1}{(2^k r)^n}\int_{Q(z,2^{k+1} r)\setminus Q(z, 2^k r)} w(y,t)^\rho dy \bigg)^{\frac{1}{\rho}}\\
    &\leq C\bigg(\frac{1}{(2^k r)^n}
    \int_{Q(z,2^{k+1}r)\setminus Q(z, 2^k r)} w(y,t)^pdy\bigg)^{\frac{1}{p}},
    \end{align*}
where we used H\"older's inequality for the last inequality since $p\geq\rho$.
Hence,
\begin{align*}
\int_{Q(z,r) \times I(\tau,l)}& w^{(k)}_*(x,t)^pdxdt \\
\leq &\frac{C}{(2^kr)^n}\int_{|\tau-t|< l}
\int_{Q(z,2^{k+1}r)\setminus Q(z, 2^k r)}w(y,t)^p\int_{|z-x|<r}1\,dxdydt\\
\leq&\frac{C}{2^{kn}}\int_{R_k}w(y,t)^pdydt.
\end{align*}
Since $R_k\subset Q(z,2^{k+1}r)\times I(\tau,2l)$, this implies that
    \begin{align*}
    r^\alpha l^\beta \Big( \frac{1}{r^n l}& \int_{Q(z,r) \times I(\tau,l)} w^{(k)}_* (x,t)^p dxdt \Big)^{\frac{1}{p}} \\
    &\leq C r^\alpha l^\beta \Big( \frac{1}{2^{kn} r^n l}\int_{R_k} w(y,t)^pdydt \Big)^{\frac{1}{p}} \\
    &\leq C 2^{-\alpha k} (2^k r)^\alpha l^\beta \Big( \frac{1}{(2^{k}r)^nl} \int_{Q(z,2^{k+1}r)\times I(\tau,2l)} w(y,t)^p dy dt \Big)^{\frac{1}{p}} \\
    &\leq C 2^{-\alpha k} \|w\|_{\mathfrak{L}^{\alpha,\beta,p}} .
    \end{align*}
Hence, since $\alpha >0$ and $p\geq\rho$, it follows that
    $$
     \sum_{k \geq 1}r^\alpha l^\beta \Big( \frac{1}{r^n l} \int_{Q(z,r) \times Q(\tau,l)} w_*^{(k)}(x,t)^pdxdt\Big)^{\frac{1}{p}}
     \leq C\|w\|_{\mathfrak{L}^{\alpha,\beta,p}}.
    $$
Consequently, combining this and \eqref{sso}, we get
$\|w_*\|_{\mathfrak{L}^{\alpha,\beta,p}}\leq C\|w\|_{\mathfrak{L}^{\alpha,\beta,p}}$
if $p>\rho$, as desired.

Next, for the remaining part ($w_\ast(\cdot,t)\in A_2(\mathbb{R}^n)$) in the lemma, we first need to recall some known facts for $A_1$ weights.
We say that $w$ is in the class $A_1$ if there is a constant $C_{A_1}$
such that for almost every $x$
\begin{equation*}
M(w)(x)\leq C_{A_1}w(x),
\end{equation*}
where $M(w)$ is the Hardy-Littlewood maximal function of $w$ (see \eqref{maxi}).
Then,
\begin{equation}\label{bac}
A_1\subset A_2\quad\text{with}\quad C_{A_2}\leq C_{A_1}.
\end{equation}
See, for example, \cite{G} for details.
Also, the following fact\footnote{\,It can be found in Chapter 5 of \cite{St}.
See also Proposition 2 in \cite{CR}.} is known:
If $M(w)(x)<\infty$ for almost every $x\in\mathbb{R}^n$, then for $0<\delta<1$
\begin{equation}\label{max}
(M(w))^\delta\in A_1
\end{equation}
with $C_{A_1}$ independent of $w$.

Now we are ready to show that
$w_\ast(\cdot,t)\in A_2(\mathbb{R}^n)$ in the $x$ variable with a constant $C_{A_2}$ uniform in almost every $t\in\mathbb{R}$.
Note first that
$$w_\ast(x,t)=(M(w(\cdot,t)^\rho))^{1/\rho}.$$
Since $w\in\mathfrak{L}^{\alpha,\beta,p}$ and $p\geq\rho$, it is an elementary matter to check that
$M(w(\cdot,t)^\rho)<\infty$ for almost every $x\in\mathbb{R}^n$.
Then, by applying \eqref{max} with $\delta=1/\rho$,
it follows that $w_\ast(\cdot,t)\in A_1$ with $C_{A_1}$ uniform in $t\in\mathbb{R}$.
Finally, from \eqref{bac} this immediately implies that
$w_\ast(\cdot,t)\in A_2$ with $C_{A_2}$ uniform in $t\in\mathbb{R}$.
\end{proof}

\section{Proof of Theorem \ref{thm}}\label{sec4}

Since $w\leq w_\ast$ and
$\|w_*\|_{\mathfrak{L}^{\alpha,\beta,p}}\leq C\|w\|_{\mathfrak{L}^{\alpha,\beta,p}}$ for $p>\rho>1$
(see Lemma \ref{lem2}), if we show the homogeneous estimate \eqref{hop} replacing $w$ with $w_\ast$, we get
    \begin{align*}
    \big\|e^{it(-\Delta)^{a/2}} f \big\|_{L^2(w(x,t))}
    &\leq \big\|e^{it(-\Delta)^{a/2}} f \big\|_{L^2(w_*(x,t))} \\
    &\leq C\|w_*\|_{\mathfrak{L}^{\alpha,\beta,p}}^{1/2}\|f\|_{L^2}\\
    &\leq C\|w\|_{\mathfrak{L}^{\alpha,\beta,p}}^{1/2}\|f\|_{L^2}
    \end{align*}
as desired.
Similarly for the inhomogeneous estimate \eqref{inho}.
So it suffices to prove Theorem \ref{thm} by replacing $w$ with $w_\ast$.
From this replacement we are in a good light that we can use the property $w_\ast(\cdot,t)\in A_2(\mathbb{R}^n)$ in Lemma \ref{lem2}.
In fact, this $A_2$ condition will enable us to make use of a localization argument in weighted $L^2$ spaces.
From the above argument, we shall assume, for simplicity of notation, that $w$ satisfies the same $A_2$ condition.

Now, let $\phi$ be a smooth function supported in $(1/2,2)$ such that
$$\sum_{k=-\infty}^\infty \phi(2^k t)=1,\quad t>0.$$
Then we define the multiplier operators $P_kf$ for $k\in\mathbb{Z}$ by
$$\widehat{P_kf}(\xi)=\phi(2^{-k}|\xi|)\widehat{f}(\xi).$$
Then we have the following.

\begin{prop}\label{prop}
If $\alpha+\beta\geq(n+2)/2$ and $1<p<2$, then
\begin{equation}\label{freq}
\big\|e^{it(-\Delta)^{a/2}} P_k f \big\|_{L^2(w(x,t))}\leq C2^{k(\alpha + a\beta -a)/2}
\|w\|_{\mathfrak{L}^{\alpha,\beta,p}}^{1/2}\|f\|_{L^2}
\end{equation}
and
\begin{equation}\label{base}
\bigg\|\int_{0}^{t}e^{i(t-s)(-\Delta)^{a/2}}P_k F(\cdot,s)ds\bigg\|_{L^2(w(x,t))}
\leq C2^{k(\alpha + a\beta -a)/2}\|w\|_{\mathfrak{L}^{\alpha,\beta,p}} \|F\|_{L^2(w(x,t)^{-1})}.
\end{equation}
\end{prop}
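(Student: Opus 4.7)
The plan is to prove \eqref{freq} and \eqref{base} together by a scaling reduction followed by a $TT^{*}$ duality and a bilinear interpolation argument.

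First, I would use the parabolic-type scaling $(x,t)\mapsto(2^{-k}x,2^{-ka}t)$ to reduce to the frequency-$1$ case $k=0$. A direct change of variables sends $e^{it(-\Delta)^{a/2}}P_{k}f$ to $e^{is(-\Delta)^{a/2}}P_{0}g$ (with $g(y)=f(2^{-k}y)$) and converts the weighted norm $\|\cdot\|_{L^{2}(w)}$ into $\|\cdot\|_{L^{2}(W)}$ for $W(y,s)=2^{-k(n+a)}w(2^{-k}y,2^{-ka}s)$. Combined with the dilation identity $\|w(\lambda\cdot,\lambda^{a}\cdot)\|_{\mathfrak{L}^{\alpha,\beta,p}}=\lambda^{-(\alpha+a\beta)}\|w\|_{\mathfrak{L}^{\alpha,\beta,p}}$, the scaling contributes exactly the factor $2^{k(\alpha+a\beta-a)/2}$ in \eqref{freq} and its square in \eqref{base}, so it suffices to prove the $k=0$ versions.

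Second, I would apply $TT^{*}$-duality to $T_{0}f=e^{it(-\Delta)^{a/2}}P_{0}f$. This shows that the $k=0$ version of \eqref{freq} is equivalent to the \emph{non-retarded} inhomogeneous bound
$$\Big\|\int_{-\infty}^{\infty}e^{i(t-s)(-\Delta)^{a/2}}P_{0}^{2}F(\cdot,s)\,ds\Big\|_{L^{2}(w)}\leq C\|w\|_{\mathfrak{L}^{\alpha,\beta,p}}\|F\|_{L^{2}(w^{-1})},$$
which has the same structure as \eqref{base} but with $\int_{-\infty}^{\infty}$ in place of $\int_{0}^{t}$. Both reduce to controlling the bilinear form $B(F,G)=\iint K_{0}(x-y,t-s)F(y,s)\overline{G(x,t)}\,dyds\,dxdt$, where $K_{0}=\mathcal{F}^{-1}(\phi^{2}e^{it|\cdot|^{a}})$. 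Stationary phase, using that the Hessian of $|\xi|^{a}$ is non-degenerate on the compact set $\mathrm{supp}\,\phi$ (since $a>1$), yields the dispersive estimate $|K_{0}(x,t)|\leq C(1+|t|)^{-n/2}$.

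The core of the argument is then to establish
$|B(F,G)|\leq C\|w\|_{\mathfrak{L}^{\alpha,\beta,p}}\|F\|_{L^{2}(w^{-1})}\|G\|_{L^{2}(w^{-1})}$
via bilinear real interpolation (Lemma \ref{lem}). I would prove two endpoint bounds on $B$ on suitable mixed-norm spaces: a purely dispersive one obtained from $|K_{0}|\leq C|t|^{-n/2}$ combined with Hardy--Littlewood--Sobolev in $t$ (here the hypothesis $a>(n+2)/2$ is what makes the HLS exponent admissible), and a ``local'' one obtained by H\"older against the $L^{p}$-integrability of $w^{\pm 1/2}$ built into the $\mathfrak{L}^{\alpha,\beta,p}$ norm (with the scaling-matched homogeneity condition $\alpha+\beta\geq(n+2)/2$). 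Applying Lemma \ref{lem} with $\theta_{0}+\theta_{1}=\theta$ chosen so that interpolation lands at $L^{2}(w^{-1})\times L^{2}(w^{-1})$ yields the desired bound with constant linear in $\|w\|_{\mathfrak{L}^{\alpha,\beta,p}}$. The retarded estimate \eqref{base} is then deduced from its non-retarded counterpart by a Christ--Kiselev-type argument, available because the source space carries a genuine $L^{2}$-integrability in time. The main obstacle I anticipate is precisely this interpolation step: identifying the two endpoint bounds so that the weight appears \emph{linearly} in the interpolated constant while the parameters $\theta_{0},\theta_{1},q,r$ simultaneously satisfy the hypotheses of Lemma \ref{lem} --- the constraints in \eqref{cond} should be exactly what allows this balance.
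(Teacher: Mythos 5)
Your opening moves (rescaling to $k=0$, passing by duality to the bilinear form for the untruncated kernel $K_0=\mathcal{F}^{-1}(\phi^2e^{it|\cdot|^a})$, and aiming for Lemma \ref{lem}) coincide with the paper's, but the core of your argument has two genuine gaps. First, the endpoint estimates you propose to interpolate are not viable as described. The bound $|K_0(x,t)|\leq C(1+|t|)^{-n/2}$ combined with Hardy--Littlewood--Sobolev in $t$ fails for $n\geq2$, since $|t|^{-n/2}$ is then not an admissible one-dimensional HLS kernel; and the hypothesis $a>(n+2)/2$ cannot rescue this, because it plays no role in the proposition at all (it only enters in Remark \ref{rem}, to guarantee that parameters satisfying \eqref{cond} exist). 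What the paper actually uses is the \emph{full space-time} decay $|K(x,t)|\leq C(1+|(x,t)|)^{-n/2}$ from Littman's lemma (Lemma \ref{25}), together with a dyadic decomposition $K=\sum_{j\geq0}\psi_jK$ into space-time annuli $|(x,t)|\sim2^j$ and a covering by cubes $Q_\lambda$ of side $2^j$. Each piece $\langle(\psi_jK)\ast F,G\rangle$ is then estimated by Young's inequality plus either H\"older against the $\mathfrak{L}^{\alpha,\beta,p}$ norm of $w$ or the trivial $L^2$ bound, yielding the three estimates \eqref{21}--\eqref{23} with explicit powers of $2^j$. The bilinear interpolation is then performed with target spaces that are \emph{weighted sequence spaces} $\ell^{\beta_i}_\infty$ over the index $j$, and the identity $(\ell^{\beta_0}_\infty,\ell^{\beta_1}_\infty)_{2/p',1}=\ell^0_1$ is precisely what sums the dyadic pieces and produces a constant linear in $\|w\|_{\mathfrak{L}^{\alpha,\beta,p}}$; the condition $\alpha+\beta\geq(n+2)/2$ is exactly the sign condition needed there. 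Your proposal identifies this balancing act as ``the main obstacle,'' but without the space-time kernel decomposition and the $\ell^a_q$ scale there is no candidate pair of endpoint spaces for which Lemma \ref{lem} delivers the claim, so the obstacle is not overcome.

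Second, your deduction of the retarded estimate \eqref{base} from the non-retarded one via a Christ--Kiselev argument does not work here: Christ--Kiselev requires the time exponent of the source space to be strictly smaller than that of the target, whereas both $L^2(w^{-1})$ and $L^2(w)$ are $L^2$-type in time (indeed they are weighted $L^2$ norms of space-time, not even mixed norms), which is exactly the excluded diagonal case. The paper avoids this entirely: since its whole argument rests on the pointwise bound $\|\psi_jK\|_\infty\leq C2^{-jn/2}$ and the cube localization, one simply replaces $K$ by the truncated kernel $\chi_{(0,\infty)}(t)K(x,t)$, which satisfies the same pointwise bound, and reruns the identical argument to get \eqref{eno}, from which \eqref{base} follows by restricting $F$ to $s\geq0$ and reflecting in $t$.
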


Assuming this proposition for the moment, we prove Theorem \ref{thm}.
First we consider the homogeneous estimate.
As mentioned above, since we may assume that
$w(\cdot,t)\in A_2(\mathbb{R}^{n})$ uniformly for almost every $t \in \mathbb{R}$,
by the Littlewood-Paley theorem on weighted $L^2$ spaces (see Theorem 1 in \cite{Ku}), we see that
    \begin{align*}
    \big\|e^{it(-\Delta)^{a/2}}f\big\|_{L^2(w(x,t))}^2
    &=\int\big\|e^{it(-\Delta)^{a/2}}f\big\|_{L^2(w(\cdot,t))}^2dt\\
    &\leq C\int\bigg\|\bigg(\sum_k\big|P_ke^{it(-\Delta)^{a/2}}f\big|^2\bigg)^{1/2}\bigg\|_{L^2(w(\cdot,t))}^2dt\\
    &=C\sum_k\big\|e^{it(-\Delta)^{a/2}}P_kf\big\|_{L^2(w(x,t))}^2.
    \end{align*}
Meanwhile, since $P_kP_jf=0$ if $|j-k|\geq2$, it follows from \eqref{freq} that
    \begin{align*}
    \sum_k\big\|e^{it(-\Delta)^{a/2}}P_kf\big\|_{L^2(w(x,t))}^2
    &=\sum_k\big\|e^{it(-\Delta)^{a/2}}P_k\big(\sum_{|j-k|\leq1}P_jf\big)\big\|_{L^2(w(x,t))}^2\\
    &\leq C\|w\|_{\mathfrak{L}^{\alpha,\beta,p}}\sum_k2^{k(\alpha + a\beta -a)}\big\|\sum_{|j-k|\leq1}P_jf\big\|_2^2\\
    &\leq C\|w\|_{\mathfrak{L}^{\alpha,\beta,p}}\|f\|_2^2.
    \end{align*}
Here, recall that $a= \alpha + a\beta$.
Thus, we get
    $$
    \big\|e^{it(-\Delta)^{a/2}} f \big\|_{L^2(w(x,t))}\leq C\|w\|_{\mathfrak{L}^{\alpha,\beta,p}}^{1/2}\|f\|_{L^2}
    $$
as desired.
The inhomogeneous estimate \eqref{inho} follows also from the same argument.
Indeed, by the Littlewood-Paley theorem on weighted $L^2$ spaces as before,
one can see that
    \begin{align*}
    &\bigg\|\int_{0}^{t} e^{i(t-s)(-\Delta)^{a/2}}F(\cdot,s)ds\bigg\|_{L^2(w(x,t))}^2\\
    &\qquad\leq C\sum_{k}\bigg\|\int_{0}^{t}e^{i(t-s)(-\Delta)^{a/2}}P_k\big(\sum_{|j-k|\leq1}P_j F(\cdot,s)\big)ds \bigg\|_{L^2(w(x,t))}^2.
    \end{align*}
By using \eqref{base}, the right-hand side in the above is bounded by
    $$
    C\|w\|_{\mathfrak{L}^{\alpha,\beta,p}}^2
    \sum_{k} 2^{k(\alpha + a\beta -a)} \big\|\sum_{|j-k|\leq1}P_j F\big\|_{L^2(w(x,t)^{-1})}^2 .
    $$
Since $a=\alpha+a\beta$, and $w(\cdot,t)^{-1}\in A_2(\mathbb{R}^n)$ if and only if $w(\cdot,t)\in A_2(\mathbb{R}^n)$,
applying the Littlewood-Paley theorem again, this is bounded by
$C\|w\|_{\mathfrak{L}^{\alpha,\beta,p}}^2\|F\|_{L^2(w(x,t)^{-1})}^2$.
Consequently,
$$\bigg\|\int_{0}^{t}e^{i(t-s)(-\Delta)^{a/2}} F(\cdot,s)ds\bigg\|_{L^2(w(x,t))}
\leq C\|w\|_{\mathfrak{L}^{\alpha,\beta,p}} \|F\|_{L^2(w(x,t)^{-1})}.$$

Now we are reduced to showing the proposition, and so the rest of this section will be devoted to it.

\begin{proof}[Proof of Proposition \ref{prop}]
We first show the estimate \eqref{freq}.
From scaling, it suffices to show the following case where $k=0$:
\begin{equation}\label{freq2}
\big\|e^{it(-\Delta)^{a/2}} P_0 f \big\|_{L^2(w(x,t))}
\leq C\|w\|_{\mathfrak{L}^{\alpha,\beta,p}}^{1/2}\|f\|_{L^2}.
\end{equation}
In fact, note that
    \begin{align*}
    \big\|e^{it(-\Delta)^{a/2}}P_k f\big\|_{L^2(w(x,t))}^2
    &\leq C2^{-kn}2^{-ak}\big\|e^{it(-\Delta)^{a/2}}P_0(f(2^{-k}\cdot))\big\|_{L^2(w(2^{-k}x,2^{-ak}t))}^2\\
    &\leq C2^{-kn}2^{-ak}\|w(2^{-k}x,2^{-ak}t)\|_{\mathfrak{L}^{\alpha,\beta,p}}\|f(2^{-k}\cdot)\|_2^2\\
    &\leq C2^{k(\alpha + a\beta -a)}\|w\|_{\mathfrak{L}^{\alpha,\beta,p}}\|f\|_2^2.
    \end{align*}
Now, by duality, \eqref{freq2} is equivalent to
    \begin{equation*}
    \bigg\|\int e^{-is(-\Delta)^{a/2}}P_0F(\cdot,s)ds\bigg\|_{L_x^2}
    \leq C\|w\|_{\mathfrak{L}^{\alpha,\beta,p}}^{1/2}\|F\|_{L^2(w^{-1})},
    \end{equation*}
and so it is enough to show the following bilinear form estimate
    \begin{equation*}
    \bigg|\bigg\langle\int_{\mathbb{R}}e^{i(t-s)(-\Delta)^{a/2}}P_0^2F(\cdot,s)ds,G(x,t)\bigg\rangle\bigg|
    \leq C\|w\|_{\mathfrak{L}^{\alpha,\beta,p}}\|F\|_{L^2(w^{-1})}\|G\|_{L^2(w^{-1})}.
    \end{equation*}
For this, let us write
    $$\int_{\mathbb{R}}e^{i(t-s)(-\Delta)^{a/2}}P_0^2F(\cdot,s)ds=K\ast F,$$
where
    $$K (x,t)=\int_{\mathbb{R}^n}e^{i(x\cdot\xi+t|\xi|^a)}\phi(|\xi|)^2d\xi.$$
Let $\psi_j:\mathbb{R}^{n+1}\rightarrow[0,1]$ be a smooth function
which is supported in $B(0,2^j) \setminus B(0,2^{j-2})$ for $j\geq1$ and in $B(0,1)$ for $j=0$,
such that $\sum_{j\geq0}\psi_j=1$.
Then, we decompose the kernel $K$ into
$$K=\sum_{j\geq0}\psi_jK,$$
and will show that
    \begin{equation}\label{345}
    \sum_{j\geq0}\big| \big\langle (\psi_j K)\ast F,G\big\rangle \big|
    \leq C\|w\|_{\mathfrak{L}^{\alpha,\beta,p}}\|F\|_{L^2(w^{-1})}\|G\|_{L^2(w^{-1})}
    \end{equation}
which implies the above bilinear form estimate.

To show \eqref{345}, we will obtain the following three estimates
    \begin{align}\label{21}
    \big| \big\langle (\psi_j K) \ast F, G \big\rangle \big|
    &\leq C2^{j (\frac{n+2}{2} -(\alpha+\beta)p)}\|w\|_{\mathfrak{L}^{\alpha,\beta,p}}^p \|F\|_{L^2(w^{-p})}\|G\|_{L^2(w^{-p})},\\
    \label{22}\big| \big\langle (\psi_j K) \ast F, G \big\rangle \big|
    &\leq C2^{j (\frac{n+2}{2} -\frac{\alpha+\beta}{2}p)}\|w\|_{\mathfrak{L}^{\alpha,\beta,p}}^{p/2}\|F\|_{L^2(w^{-p})}\|G\|_2,\\
    \label{23}\big| \big\langle (\psi_j K) \ast F, G \big\rangle \big|
    &\leq C2^{j (\frac{n+2}{2} -\frac{\alpha+\beta}{2}p)}\|w\|_{\mathfrak{L}^{\alpha,\beta,p}}^{p/2}\|F\|_2\|G\|_{L^2(w^{-p})}.
    \end{align}
Assuming these estimates for the moment, let us show the estimate \eqref{345}
by making use of the bilinear interpolation lemma, Lemma \ref{lem}.
First, define the bilinear vector-valued operator $T$ by
\begin{equation*}
T(F,G)=\big\{\big\langle (\psi_j K)\ast F,G\big\rangle\big\}_{j\geq0}.
\end{equation*}
Then \eqref{345} is equivalent to
\begin{equation}\label{bii}
T:L^2(w^{-1})\times L^2(w^{-1})\rightarrow\ell_1^0(\mathbb{C})
\end{equation}
with the operator norm  $C\|w\|_{\mathfrak{L}^{\alpha,\beta,p}}$.
Here, for $a\in\mathbb{R}$ and $1\leq p\leq\infty$,
$\ell^a_p(\mathbb{C})$ denotes the weighted sequence space with the norm
$$\|\{x_j\}_{j\geq0} \|_{\ell^a_p} =
\begin{cases}
\big(\sum_{j\geq0}2^{jap}|x_j|^p\big)^{\frac{1}{p}},
\quad\text{if}\quad p\neq\infty,\\
\,\sup_{j\geq0}2^{ja}|x_j|,
\quad\text{if}\quad p=\infty.
\end{cases}$$
Next, note that the above three estimates \eqref{21}, \eqref{22} and \eqref{23} become
    \begin{align}
    \nonumber\|T(F,G)\|_{\ell_\infty^{\beta_0}(\mathbb{C})}
    &\leq C\|w\|_{\mathfrak{L}^{\alpha,\beta,p}}^p \|F\|_{L^2(w^{-p})}\|G\|_{L^2(w^{-p})},\\
    \label{222}\|T(F,G)\|_{\ell_\infty^{\beta_1}(\mathbb{C})}
    &\leq C\|w\|_{\mathfrak{L}^{\alpha,\beta,p}}^{p/2}\|F\|_{L^2(w^{-p})}\|G\|_2,\\
    \label{233}\|T(F,G)\|_{\ell_\infty^{\beta_1}(\mathbb{C})}
    &\leq C\|w\|_{\mathfrak{L}^{\alpha,\beta,p}}^{p/2}\|F\|_2\|G\|_{L^2(w^{-p})},
    \end{align}
respectively, with $\beta_0=-(\frac{n+2}{2} -(\alpha+\beta)p)$
and $\beta_1=-(\frac{n+2}{2}-\frac{\alpha+\beta}{2}p)$.
Then, applying Lemma \ref{lem} with $\theta_0=\theta_1=1/p^\prime$ and $q=r=2$, one can get
for $1<p<2$
    $$
    T:(L^2(w^{-p}), L^2)_{1/p',2}\times(L^2(w^{-p}), L^2)_{1/p',2}
    \rightarrow(\ell^{\beta_0}_{\infty}(\mathbb{C}),\ell^{\beta_1}_{\infty}(\mathbb{C}))_{2/p',1}
    $$
with the operator norm  $C\|w\|_{\mathfrak{L}^{\alpha,\beta,p}}$.
Finally, we use the following real interpolation space identities in Lemma \ref{id}
in order to obtain \eqref{bii}.
Indeed, from the lemma one can see that
$$(L^2(w^{-p}), L^2)_{1/p',2}=L^2(w^{-1})$$
for $1<p<2$, and
$$(\ell^{\beta_0}_{\infty}(\mathbb{C}),\ell^{\beta_1}_{\infty}(\mathbb{C}))_{2/p',1}=\ell_{1}^{0}(\mathbb{C})$$
if $(1-\frac2{p'})\beta_0+\frac2{p'}\beta_1=0$ (i.e., $\alpha + \beta =\frac{n+2}{2}$).
Hence, we get \eqref{bii} if $\alpha + \beta =\frac{n+2}{2}$ and $1<p<2$.

\begin{lem}\label{id}({\it cf. Theorems 5.4.1 and 5.6.1 in \cite{BL}})
Let $0<\theta<1$. Then one has
$$( L^{2} (w_0), L^{2} (w_1) )_{\theta,2} = L^2(w),\quad w= w_0^{1-\theta} w_1^{\theta},$$
and for $1\leq q_0,q_1,q\leq\infty$
$$(\ell^{s_0}_{q_0}, \ell^{s_1}_{q_1} )_{\theta, q}=\ell^s_q,\quad s= (1-\theta)s_0 + \theta s_1.$$
\end{lem}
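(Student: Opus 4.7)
Both identities are by now classical real-interpolation results, and the proposed proof of each runs by direct computation of the $K$-functional, tractable here because the $L^{2}$ (respectively, coordinatewise) structure reduces the infimum over admissible splittings to a pointwise optimization.

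For the first identity, the plan is to establish the pointwise equivalence
\[
K(t,f)^{2} \;\asymp\; \int_{\mathbb{R}^{n}} |f(x)|^{2}\,\min\bigl(w_{0}(x),\,t^{2}w_{1}(x)\bigr)\,dx.
\]
The upper bound follows by testing with the concrete competitor $f_{0}=f\,\chi_{\{w_{0}\le t^{2}w_{1}\}}$, $f_{1}=f-f_{0}$, which decouples the two weights in a single cut-off. The lower bound is obtained pointwise: for any decomposition $f(x)=f_{0}(x)+f_{1}(x)$ one has $w_{0}(x)|f_{0}(x)|^{2}+t^{2}w_{1}(x)|f_{1}(x)|^{2}\gtrsim \min(w_{0}(x),t^{2}w_{1}(x))|f(x)|^{2}$ by Cauchy--Schwarz, and integrating yields the claim, modulo the automatic equivalence between $K(t,\cdot)$ and its $\ell^{2}$-aggregated variant on a Hilbert couple. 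Substituting this formula into
\[
\|f\|_{\theta,2}^{2} \;=\; \int_{0}^{\infty} (t^{-\theta}K(t,f))^{2}\,\tfrac{dt}{t},
\]
swapping integrals by Tonelli, and splitting the inner $t$-integral at the break point $t_{\star}(x)=\sqrt{w_{0}(x)/w_{1}(x)}$, two elementary power integrations produce, up to a constant depending only on $\theta$, the pointwise factor $w_{0}(x)^{1-\theta}w_{1}(x)^{\theta}=w(x)$. This gives $\|f\|_{\theta,2}^{2}\asymp\|f\|_{L^{2}(w)}^{2}$.

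For the second identity, the same computation transports to the discrete setting coordinatewise. Comparing $2^{js_{0}}$ with $t\cdot 2^{js_{1}}$ index by index yields a discrete analog of the min-function above, and substitution into the $(\cdot,\cdot)_{\theta,q}$ norm, together with a Tonelli-style swap of sum and integral, converts the expression into a single weighted sum with weight $(2^{js_{0}})^{1-\theta}(2^{js_{1}})^{\theta}=2^{js}$ and summation exponent $q$, which is exactly the norm of $\ell^{s}_{q}$ with $s=(1-\theta)s_{0}+\theta s_{1}$. A shortcut is the retract method: the multiplication map $\{x_{j}\}\mapsto\{2^{js_{i}}x_{j}\}$ is an isometry of $\ell^{s_{i}}_{q_{i}}$ onto the unweighted $\ell_{q_{i}}$, so interpolation of the weighted couple reduces, after a careful bookkeeping of the two different weights, to the classical interpolation theorem for $\ell^{q}$ spaces (Theorem~5.2.1 in \cite{BL}).

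The main delicate point is the pointwise $K$-functional identity in the first part: one has to verify that the proposed competitor $(f_{0},f_{1})$ is genuinely admissible (i.e.~each piece lies in the corresponding weighted $L^{2}$), and that the pointwise Cauchy--Schwarz lower bound survives the infimum over \emph{all} admissible splittings. Both are routine on a Hilbert couple but constitute the only non-mechanical step; once secured, the remaining manipulations in both parts are elementary calculus and standard Tonelli applications.
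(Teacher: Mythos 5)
The paper gives no proof of this lemma at all --- it is quoted directly from Bergh--L\"ofstr\"om (Theorems 5.4.1 and 5.6.1) --- so any self-contained argument is necessarily a different route. Your treatment of the first identity is essentially correct and is the standard one: on a Hilbert couple the $K$-functional is computed by a pointwise minimization, giving $K(t,f)^2 \asymp \int |f|^2 \min\bigl(w_0, t^2 w_1\bigr)\,dx$ (the exact minimizer yields the harmonic-mean weight $t^2w_0w_1/(w_0+t^2w_1)$, which is comparable to the min), and integrating $t^{-2\theta}$ of this against $dt/t$ with the split at $t_\star=\sqrt{w_0/w_1}$ produces $c_\theta\,w_0^{1-\theta}w_1^{\theta}$. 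Note that you correctly use the measure $dt/t$ in the interpolation norm; the paper's displayed definition of $\|\cdot\|_{(A_0,A_1)_{\theta,q}}$ accidentally omits it.

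The second identity is where your argument has a genuine gap. A coordinatewise minimization does not compute $K(t,\cdot)$ for the couple $(\ell^{s_0}_{q_0},\ell^{s_1}_{q_1})$ with general $q_0,q_1$: testing on a single coordinate only gives the lower bound $K(t,x)\ge\sup_j\min(2^{js_0},t\,2^{js_1})|x_j|$, while the triangle inequality gives the upper bound with $\sum_j$ in place of $\sup_j$, and these are not comparable without further input. The retract shortcut fails for the same reason: the isometry $\{x_j\}\mapsto\{2^{js_i}x_j\}$ depends on $i$, so you cannot strip both weights simultaneously and reduce to the unweighted couple; and even if you could, $(\ell_{q_0},\ell_{q_1})_{\theta,q}$ is a Lorentz sequence space, equal to $\ell_q$ only when $1/q=(1-\theta)/q_0+\theta/q_1$. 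Indeed, the identity as stated is \emph{false} when $s_0=s_1$ and $q$ is arbitrary; Theorem 5.6.1 of \cite{BL} carries the hypothesis $s_0\ne s_1$, and its proof uses exactly this: away from the crossover index $j_0(t)$ the factor $\min(2^{js_0},t\,2^{js_1})$ decays geometrically in $|j-j_0(t)|$, which makes the $\sup_j$ and $\sum_j$ bounds comparable after the $t$-integration and explains why the answer $\ell^s_q$ is independent of $q_0,q_1$. You should add the hypothesis $s_0\ne s_1$ (harmless for the paper, since in the application $\beta_0\ne\beta_1$) and supply this geometric-decay step; as written, the ``careful bookkeeping of the two different weights'' is precisely the missing content.
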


When $\alpha + \beta >\frac{n+2}{2}$, we note that $\gamma:=\frac{p'}{2}(\alpha+\beta-\frac{n+2}{2})>0$.
Since $j\geq0$ and $\beta_1<0$, the estimates \eqref{222} and \eqref{233} are trivially satisfied
for $\beta_1$ replaced by $\beta_1-\gamma$.
Thus, by the same argument we only need to check that $(1-\frac2{p'})\beta_0+\frac2{p'}(\beta_1-\gamma)=0$
which is an easy computation.
Consequently, we get \eqref{bii} if $\alpha + \beta \geq \frac{n+2}{2}$ and $1<p<2$.
This completes the proof for \eqref{freq}.

\smallskip

It remains only to show the three estimates \eqref{21}, \eqref{22} and \eqref{23}.
For $j\geq0$, let $\{Q_\lambda\}_{\lambda\in 2^j\mathbb{Z}^{n+1}}$ be a collection of
cubes $Q_\lambda\subset\mathbb{R}^{n+1}$ centered at $\lambda$ with side length $2^j$.
Then by disjointness of cubes, it is easy to see that
\begin{align*}
\big|\big\langle(\psi_j K)\ast F,G\big\rangle\big|
&\leq\sum_{\lambda,\mu\in 2^j \mathbb{Z}^{n+1}}\big|\big\langle(\psi_j K)\ast(F\chi_{Q_\lambda}),G \chi_{Q_\mu} \big\rangle\big|\\
&\leq \sum_{\lambda\in 2^j \mathbb{Z}^{n+1}} \big| \big\langle (\psi_j K)\ast
(F\chi_{Q_\lambda}),G\chi_{\widetilde{Q}_\lambda}\big\rangle\big|,
\end{align*}
where $\widetilde{Q}_\lambda$ denotes the cube with side length $2^{j+2}$ and the same center as $Q_\lambda$.
By Young's and Cauchy-Schwartz inequalities, it follows now that
\begin{align}\label{33}
\big| \big\langle (\psi_j K) \ast F, G \big\rangle \big|
\nonumber&\leq \sum_{\lambda\in 2^j \mathbb{Z}^{n+1}}\|(\psi_j K)\ast(F\chi_{Q_\lambda})\|_{\infty}
\|G\chi_{\widetilde{Q}_\lambda}\|_1\\
\nonumber&\leq\sum_{\lambda\in 2^j \mathbb{Z}^{n+1}}\|\psi_jK\|_{\infty}\|F\chi_{Q_\lambda}\|_1
\|G\chi_{\widetilde{Q}_\lambda}\|_1\\
&\leq\|\psi_j K\|_{\infty}
\Big(\sum_{\lambda\in 2^j \mathbb{Z}^{n+1}}\|F\chi_{Q_\lambda}\|_1^2 \Big)^{\frac{1}{2}}
\Big(\sum_{\lambda\in2^j\mathbb{Z}^{n+1}}\|G\chi_{\widetilde{Q}_\lambda}\|_1^2\Big)^{\frac{1}{2}}.
\end{align}

To estimate the first term $\| \psi_j K \|_{\infty}$ in the right-hand side of \eqref{33},
we will use the following well-known lemma\footnote{\,It is essentially due to Littman \cite{L}. See also \cite{St}, VIII, Section 5, B.}, Lemma \ref{25}.
In fact, by applying the lemma with $\varphi(\xi)=|\xi|^a$, $a>1$, it follows that
    $$
    |K(x,t)|=\bigg|\int_{\mathbb{R}^n} e^{i(x\cdot\xi + t|\xi|^a)} \phi(|\xi|)^2 d\xi\bigg|
    \leq C(1+ |(x,t)| )^{-\frac{n}{2}},
    $$
since the Hessian matrix $H\psi$
has $n$ non-zero eigenvalues for each $\xi \in \{\xi\in\mathbb{R}^n: |\xi|\sim 1 \}$.
This implies the estimate
    \begin{equation}\label{stationary_phase}
    \|\psi_j K \|_{\infty}\leq C2^{-j \frac{n}{2}}.
    \end{equation}

\begin{lem}\label{25}
Let $H\varphi$ be the Hessian matrix given by $(\frac{\partial^2\varphi}{\partial\xi_i\partial\xi_j})$.
Suppose that $\eta$ is a compactly supported smooth function on $\mathbb{R}^n$
and $\varphi$ is a smooth function satisfying rank $H\varphi\geq k$ on the support of $\eta$.
Then, for $(x,t)\in \mathbb{R}^{n+1}$
$$\bigg|\int e^{i(x\cdot\xi+t\varphi(\xi))}\eta(\xi)d\xi\bigg|
\leq C(1+|(x,t)|)^{-\frac{k}{2}}.$$
\end{lem}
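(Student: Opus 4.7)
My plan is to reduce to the classical method of stationary phase by rescaling $(x,t)$. For $|(x,t)| \leq 1$ the estimate is immediate: the integral is bounded by $\|\eta\|_{L^1}$ and the weight $(1+|(x,t)|)^{-k/2}$ is bounded below by a positive constant, so the bound holds with a suitable $C$. Thus I may assume $\lambda := |(x,t)| \geq 1$. I would then write $(x,t) = \lambda(\omega,\tau)$ with $(\omega,\tau)$ on the unit sphere $S^n \subset \mathbb{R}^{n+1}$, so that the phase becomes $\lambda\, \Psi_{\omega,\tau}(\xi)$ where $\Psi_{\omega,\tau}(\xi) := \omega\cdot \xi + \tau\,\varphi(\xi)$. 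Crucially, the $\xi$-Hessian of $\Psi_{\omega,\tau}$ equals $\tau\, H\varphi(\xi)$, since the $\omega\cdot\xi$ piece is linear in $\xi$.

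The argument then splits into two regimes in the parameter $\tau$. In the \emph{non-stationary regime} $|\tau| \leq 1/2$, we have $|\omega| \geq \sqrt{3}/2$, and since $\nabla\varphi$ is bounded on the compact set $\mathrm{supp}\,\eta$, the gradient $|\nabla_\xi \Psi_{\omega,\tau}| = |\omega + \tau\nabla\varphi|$ is bounded below by a positive constant uniformly in $(\omega,\tau)$. Repeated integration by parts via the transpose of the first-order operator $L = (i\lambda)^{-1}(\nabla\Psi/|\nabla\Psi|^2)\cdot\nabla$, which reproduces $e^{i\lambda\Psi}$, yields decay $O(\lambda^{-N})$ for any $N$, which is much stronger than the required $\lambda^{-k/2}$.

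In the \emph{stationary regime} $|\tau| \geq 1/2$, the Hessian of $\Psi_{\omega,\tau}$ in $\xi$ has rank $\geq k$ everywhere on $\mathrm{supp}\,\eta$ by hypothesis. I would introduce a smooth partition of unity on $\mathrm{supp}\,\eta$ to localize near each point $\xi_0$, and on each piece apply the constant-rank theorem, followed by a Morse-type lemma in the non-degenerate directions, to make a smooth change of coordinates $\xi \mapsto (\eta',\eta'') \in \mathbb{R}^k \times \mathbb{R}^{n-k}$ in which $\Psi_{\omega,\tau}$ takes the canonical form $Q(\eta') + R(\eta'')$, with $Q$ a non-degenerate quadratic form in the $k$ variables $\eta'$. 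The integral then factors: the $\eta'$-integration produces a product of $k$ one-dimensional Gaussian-type oscillatory integrals, each contributing $\lambda^{-1/2}$ by standard one-variable stationary phase, while the $\eta''$-integration is bounded uniformly. This yields the target decay $\lambda^{-k/2}$.

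The main obstacle is obtaining a bound in the stationary regime that is \emph{uniform} in the parameter $(\omega,\tau)$ ranging over the compact set $\{(\omega,\tau) \in S^n : |\tau| \geq 1/2\}$. The Morse-type normal form and the associated change of coordinates depend smoothly on $(\omega,\tau)$ and on the base point $\xi_0 \in \mathrm{supp}\,\eta$, and both of these parameter spaces are compact; a standard finite-subcover argument then produces a single constant $C$ depending only on finitely many derivatives of $\varphi$ on $\mathrm{supp}\,\eta$ and on $\eta$ itself. Combining the two regimes yields the claimed bound $C(1+|(x,t)|)^{-k/2}$. This strategy is essentially the original Littman argument referenced in the footnote.
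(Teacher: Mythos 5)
The paper itself does not prove this lemma: it is quoted as a known result with a footnote pointing to Littman and to Stein, Ch.~VIII, \S 5.B, so the only comparison available is with that classical argument --- which is indeed the strategy you outline (non-stationary regime by integration by parts, stationary regime by $k$-dimensional stationary phase, uniformity by compactness). However, your non-stationary regime contains a genuine error. With $|\omega|^2+\tau^2=1$ and $|\tau|\le 1/2$ you only get $|\nabla_\xi\Psi_{\omega,\tau}|=|\omega+\tau\nabla\varphi|\ \ge\ |\omega|-|\tau|\,\sup_{\mathrm{supp}\,\eta}|\nabla\varphi|\ \ge\ \tfrac{\sqrt3}{2}-\tfrac12\sup_{\mathrm{supp}\,\eta}|\nabla\varphi|$, and boundedness of $\nabla\varphi$ does not rule out the exact cancellation $\omega=-\tau\nabla\varphi(\xi)$: this happens whenever $|\nabla\varphi(\xi)|=|\omega|/|\tau|$, which is possible for $|\tau|\le 1/2$ as soon as $\sup|\nabla\varphi|\ge\sqrt3$. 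In the very application made in this paper ($\varphi(\xi)=|\xi|^a$ on $1/2\le|\xi|\le 2$, where $|\nabla\varphi|$ reaches $a\,2^{a-1}$) there are stationary points with $|\tau|$ well below $1/2$, so the claimed uniform lower bound on the gradient, and hence the $O(\lambda^{-N})$ integration-by-parts bound, fails in part of your ``non-stationary'' regime. The repair is standard but must be made: take the threshold $\tau_0=c/(1+\sup_{\mathrm{supp}\,\eta}|\nabla\varphi|)$ rather than $1/2$ (equivalently, split according to whether $|x|\ge 2|t|\sup_{\mathrm{supp}\,\eta}|\nabla\varphi|$); in the first case $|\nabla_\xi(x\cdot\xi+t\varphi)|\gtrsim|x|\gtrsim\lambda$ and integration by parts applies, and in the complementary case $|t|\gtrsim\lambda$, so stationary phase with large parameter $|t|$ still yields $\lambda^{-k/2}$.

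A second, softer gap is in the stationary regime: you invoke ``the constant-rank theorem, followed by a Morse-type lemma'' to reach a normal form $Q(\eta')+R(\eta'')$. The hypothesis is only $\operatorname{rank}H\varphi\ge k$, not constant rank, so such a global splitting of variables is not available in general. The standard route (and the one in Stein/Littman) avoids it: at each $\xi_0\in\mathrm{supp}\,\eta$ choose a $k\times k$ minor of $H\varphi(\xi_0)$ with non-vanishing determinant; by continuity and compactness its determinant is bounded away from zero on a neighborhood, uniformly over a finite subcover. On each piece apply non-degenerate stationary phase in those $k$ variables for each frozen value of the remaining $n-k$ variables (the constant depends only on the lower bound for that determinant and on upper bounds for finitely many derivatives of $\varphi$ and $\eta$, hence is uniform in $(\omega,\tau)$ and in the frozen variables), then integrate trivially over the remaining variables. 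With these two repairs your argument becomes the classical proof the paper is citing.
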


The other terms are estimated using H\"older's inequality, as follows:
    \begin{align}\label{get_Morrey}
    \sum_{\lambda\in 2^j \mathbb{Z}^{n+1}}\|F\chi_{Q_\lambda}\|_1^2
    \nonumber&=\sum_{\lambda\in 2^j \mathbb{Z}^{n+1}} \Big( \int_{Q_\lambda} |F\chi_{Q_\lambda}| w^{-\frac{p}{2}} w^{\frac{p}{2}}dxdt\Big)^2\\
    \nonumber&\leq\sum_{\lambda\in 2^j \mathbb{Z}^{n+1}} \Big( \int_{Q_\lambda} |F\chi_{Q_\lambda}|^2 w^{-p} dxdt \Big)\Big(\int_{Q_\lambda}w^{p}dxdt\Big)\\
    \nonumber&\leq \sup_{\lambda\in 2^j \mathbb{Z}^{n+1}}\Big(\int_{Q_\lambda}w^{p}dxdt\Big)
    \sum_{\lambda\in 2^j \mathbb{Z}^{n+1}} \Big( \int_{Q_\lambda} |F\chi_{Q_\lambda}|^2 w^{-p}dxdt\Big)\\
    &\leq C2^{j(n+1-(\alpha+\beta)p)}\|w\|_{\mathfrak{L}^{\alpha,\beta,p}}^p \| F \|_{L^2(w^{-p})}^2
    \end{align}
while
    \begin{align}\label{get_trivial}
    \sum_{\lambda\in 2^j\mathbb{Z}^{n+1}}\|F\chi_{Q_\lambda}\|_1^2
    \nonumber&\leq\sum_{\lambda\in 2^j\mathbb{Z}^{n+1}}\|F\chi_{Q_\lambda}\|_2^2\|\chi_{Q_\lambda}\|_2^2\\
    &\leq C2^{j(n+1)}\|F\|_2^2.
    \end{align}
Similarly for $\sum_{\lambda\in 2^j\mathbb{Z}^{n+1}}\|G\chi_{\widetilde{Q}_\lambda}\|_1^2$.

By combining \eqref{33}, \eqref{stationary_phase}, \eqref{get_Morrey} and \eqref{get_trivial},
one can easily get the desired three estimates \eqref{21}, \eqref{22} and \eqref{23}.

\medskip

Let us now turn to the second estimate \eqref{base} in the proposition.
We will show the following estimate
    \begin{equation}\label{eno}
    \bigg\|\int_{-\infty}^{t} e^{i(t-s)(-\Delta)^{a/2}} F(\cdot,s)ds\bigg\|_{L^2(w(x,t))}
    \leq C\|w\|_{\mathfrak{L}^{\alpha,\beta,p}} \|F\|_{L^2(w(x,t)^{-1})}
    \end{equation}
which implies \eqref{base}.
In fact, to obtain \eqref{base} from \eqref{eno},
first decompose the $L_t^2$ norm in the left-hand side of \eqref{base}
into two parts, $t\geq0$ and $t<0$. Then the latter can be reduced to the former
by changing the variable $t\mapsto-t$, and so it is only needed to consider the first part $t\geq0$.
But, since $[0,t)=(-\infty,t)\cap[0,\infty)$, applying \eqref{eno} with $F$ replaced by $\chi_{[0,\infty)}(s)F$,
one can bound the first part as desired.
To show \eqref{eno}, by duality we may show the following bilinear form estimate as before:
    \begin{equation*}
    \bigg|\bigg\langle\int_{-\infty}^{t} e^{i(t-s)(-\Delta)^{a/2}}P_0^2F(\cdot,s)ds,G(x,t)\bigg\rangle\bigg|
    \leq C\|w\|_{\mathfrak{L}^{\alpha,\beta,p}}\|F\|_{L^2(w^{-1})}\|G\|_{L^2(w^{-1})}.
    \end{equation*}
Let us write
    \begin{align*}
    \int_{-\infty}^{t}e^{i(t-s)(-\Delta)^{a/2}}P_0F(\cdot,s)ds
    &=\int_{\mathbb{R}}\chi_{(0,\infty)}(t-s)e^{i(t-s)(-\Delta)^{a/2}}P_0F(\cdot,s)ds\\
    &=K\ast F,
    \end{align*}
where
    $$
    K(x,t)=\int_{\mathbb{R}^n}\chi_{(0,\infty)}(t)e^{i(x\cdot\xi+t|\xi|^a)}\phi(|\xi|)d\xi.
    $$
Then, it is clear that the above bilinear estimate would follow from the same argument
for the homogeneous part \eqref{freq}.
So we omit the details.
\end{proof}


\end{document}